\documentclass[11pt,reqno]{amsart}
\usepackage[utf8x]{inputenc}
\usepackage[english]{babel}
\usepackage{amsmath,amsfonts,amssymb,amsthm,array}
\usepackage{graphicx}
\usepackage[colorinlistoftodos]{todonotes}
\usepackage{cases}
\usepackage{tikz}
\usepackage{caption}
\usepackage{subcaption}
\usepackage{cellspace}
\setlength{\cellspacebottomlimit}{5pt}
\setlength{\cellspacetoplimit}{5pt}
%\setlength{\parindent}{0pt}

% To enable 2020 math subject classification
\makeatletter
\@namedef{subjclassname@2020}{\textup{2020} Mathematics Subject Classification}
\makeatother

\usepackage[pagebackref=true]{hyperref}

\usepackage{hyperref}
\usepackage{breakurl}   %this allows for url breaks in pdf

\newcommand{\Z}{{\mathbb Z}}
\newcommand{\R}{{\mathbb R}}

\newcommand{\T}{{\mathbb T}}

\newcommand{\area}{{\rm area}}

\newcommand{\sys}{{\rm sys}}
\newcommand{\scg}{{\rm scg}}

\newcommand{\vol}{{\rm vol}}

\newcommand{\length}{{\rm length}}

\numberwithin{equation}{section}

\newtheorem{theorem}{Theorem}[section]
\newtheorem{proposition}[theorem]{Proposition}

\newtheorem{lemma}[theorem]{Lemma}
\newtheorem{claim}[theorem]{Claim}

\theoremstyle{definition}
\newtheorem{definition}[theorem]{Definition}

\newtheorem{remark}[theorem]{Remark}

\long\def\forget#1\forgotten{} 

\begin{document}

\title[Sharp bounds on the length of the shortest closed geodesic]{Sharp upper bounds on the length of the shortest closed geodesic on complete punctured spheres of finite area}

\author[A.~Jabbour]{Antonia Jabbour}
\author[S.~Sabourau]{St\'ephane Sabourau}

\thanks{Partially supported by the ANR project Min-Max (ANR-19-CE40-0014).}

\address{Univ Paris Est Creteil, Univ Gustave Eiffel, CNRS, LAMA, F-94010, Cr\'eteil, France}

\email{antonia.jabbour@u-pec.fr}

\address{Univ Paris Est Creteil, Univ Gustave Eiffel, CNRS, LAMA, F-94010, Cr\'eteil, France}
%\address{\parbox{\linewidth}{Univ Paris Est Creteil, Univ Gustave Eiffel, CNRS, LAMA, F-94010, Cr\'eteil, France \\
%CRM (UMI3457), CNRS, Montr\'eal, QC H3C 3J7, Canada}}

\email{stephane.sabourau@u-pec.fr}

\subjclass[2020]
{Primary 53C23; Secondary 53C22, 53C60, 58E11}

\keywords{shortest closed geodesic, optimal systolic inequalities, extremal metrics, noncompact surfaces, punctured spheres, ramified covers}

\begin{abstract}
We establish sharp universal upper bounds on the length of the shortest closed geodesic on a punctured sphere with three or four ends endowed with a complete Riemannian metric of finite area.
These sharp curvature-free upper bounds are expressed in terms of the area of the punctured sphere.
In both cases, we describe the extremal metrics, which are modeled on the Calabi-Croke sphere or the tetrahedral sphere.
We also extend these optimal inequalities for reversible and non-necessarily reversible Finsler metrics.
In this setting, we obtain optimal bounds for spheres with a larger number of punctures.
Finally, we present a roughly asymptotically optimal upper bound on the length of the shortest closed geodesic for spheres/surfaces with a large number of punctures in terms of the area.
\end{abstract}

\maketitle

\section{Introduction}

This article deals with universal upper bounds on the length of the shortest closed geodesic on surfaces with a complete Riemannian metric of finite area.
The existence of a closed geodesic on a closed surface with a Riemannian metric follows from a minimization process using Ascoli's theorem in the nonsimply connected case and from Birkhoff's minmax principle in the simply connected case. 
For noncompact surfaces with a complete Riemannian metric, closed geodesics may not exist (the Euclidean plane yields an obvious example).
However, it was proved by Thorbergsson~\cite{thor} for surfaces with at least three ends and by Bangert~\cite{bang} for surfaces with one or two ends that every noncompact surface~$\Sigma$ with a complete Riemannian metric of finite area has a closed geodesic.
This allows us to introduce
\[
\scg(\Sigma) = \inf \{ \length(\gamma) \mid \gamma \mbox{ is a closed geodesic of } \Sigma \}.
\]
Note that in higher dimension the existence of a closed geodesic on a closed Riemannian manifold has been established by Fet and Lyusternik, but whether closed geodesics exist or not on any complete noncompact Riemannian \mbox{$n$-manifold} of finite volume with $n \geq 3$ is an open question; see~\cite[Question~2.3.1]{BM}.

\medskip

In this article, we are interested in finding good (if possible optimal) curvature-free upper bounds on~$\scg(\Sigma)$ for every surface~$\Sigma$ with a complete Riemannian metric of finite area~$A$.
For every nonsimply connected closed surface, it was independently proved by Hebda~\cite{hebda} and Burago-Zalgaller~\cite{BZ} that $\scg(\Sigma) \leq \sqrt{2} \, \sqrt{\area(\Sigma)}$.
In this case, optimal bounds are only known for the torus (Loewner 1949, unpublished, see~\cite{katz}), the projective plane (Pu 1952, \cite{pu}) and the Klein bottle (Bavard 1986, \cite{bav}); see also Section~\ref{sec:proof} and Section~\ref{sec:extremal} for a brief presentation of the extremal inequalities.
For the sphere, it was proved by Croke~\cite{cro88} that $\scg(\Sigma) \leq 31 \, \sqrt{\area(\Sigma)}$.
This bound was subsequently improved in~\cite{NR02}, \cite{sab04} and~\cite{rot}, where the current best bound with $31$ replaced with $4 \sqrt{2}$ is due to Rotman~\cite{rot}.
It is conjectured that the global maximum for the length of the shortest closed geodesic among Riemannian metrics with fixed area on the sphere is attained by the Calabi-Croke sphere (see~\cite{cro88} and~\cite{CK03}), which would yield
\begin{equation} \label{eq:CC}
\scg(S^2) \leq 2^\frac{1}{2} \, 3^\frac{1}{4} \, \sqrt{\area(S^2)}.
\end{equation}
Recall that the Calabi-Croke sphere is defined as the piecewise flat sphere with three conical singularities of angle~$\frac{2\pi}{3}$ obtained by gluing two copies of an equilateral triangle along their boundaries.
Though this conjecture remains wide open, it was proved by Balacheff~\cite{bal10} that the Calabi-Croke sphere is a local maximum (see also \cite{sab10} for an alternate proof extending to the Lipschitz distance topology).
No conjecture is available for other surfaces, except in genus~$3$, where Calabi constructed nonpositively curved piecewise flat metrics with systolically extremal-like properties; see~\cite{cal} and~\cite{sab11} (and~\cite{SY19} for related systolic-like properties in genus~$2$).
Under a nonpositive curvature assumption, extremal systolic inequalities have been established for the genus~$2$ surface and the connected sum of three projective planes; see~\cite{KS06} and~\cite{KS15}.
In both cases, the extremal nonpositively curved metrics are piecewise flat with conical singularities.
It was later proved that this structure is common to all extremal nonpositively curved surfaces; see~\cite{KS}.
Extremal systolic inequalities in a fixed conformal class have been investigated in relation with closed string field theory; see~\cite{HZ20}, \cite{HZ20b}, \cite{NZ19} for the most recent contributions and the references therein.

For noncompact surfaces~$\Sigma$ with a complete Riemannian metric of finite area, it was also shown by Croke~\cite{cro88} that $\scg(\Sigma) \leq 31 \, \sqrt{\area(\Sigma)}$ (without any curvature assumption).
This bound has recently been improved by Beach and Rotman~\cite{BR20}, where the constant~$31$ is replaced with $4\sqrt{2}$ for surfaces with one puncture and with $2\sqrt{2}$ for surfaces with at least two punctures.
The authors also conjectured that the optimal bound for a punctured sphere with at most three punctures is the same as that for the sphere; see~\eqref{eq:CC}.

\medskip

In this article, we show that this conjecture is true for spheres with exactly three punctures and prove an optimal bound for spheres with four punctures.
These are the only new optimal universal upper bounds on the length of the shortest closed geodesic obtained during the almost 35 years since Bavard's inequality on the Klein bottle~\cite{bav}. 
We also improve the best known upper bounds for spheres with a higher number of punctures.
More precisely, we have the following.

%\subsection{Sharp upper bounds on punctured spheres}

\begin{theorem} \label{theo:main}
Let $\Sigma = S^2 \setminus \{ x_1,\dots, x_k \}$ be a $k$-punctured sphere with a complete Riemannian metric of finite area.
Then the following holds.
\begin{enumerate}
\item If $k=3$ then there exists a noncontractible figure-eight geodesic~$\gamma$ on~$\Sigma$ such that
\begin{equation} \label{eq:main1}
\length(\gamma) < 2^{\frac{1}{2}} \cdot 3^{\frac{1}{4}} \, \sqrt{\area(\Sigma)}.
%\area(\Sigma) \geq \frac{\sqrt{3}}{6} \, \scg(\Sigma)^2.
\end{equation}
Furthermore, this inequality is optimal. \label{main1}
\item If $k \geq 4$ then there exists a noncontractible closed geodesic~$\gamma$ on~$\Sigma$ such that
\begin{equation} \label{eq:main2}
\length(\gamma) < 2 \cdot 3^{-\frac{1}{4}} \, \sqrt{\area(\Sigma)}.
%\area(\Sigma) \geq \frac{\sqrt{3}}{4} \, \scg(\Sigma)^2.
\end{equation}
Furthermore, this inequality is optimal when $k=4$. \label{main2}
\forget
\item If $k \geq 5$ then there exists a noncontractible closed geodesic~$\gamma$ on~$\Sigma$ such that
\begin{equation} \label{eq:main3}
\length(\gamma) < 2 \cdot 3^{-\frac{1}{4}} \, \sqrt{\area(\Sigma)} - \varepsilon.
%\area(\Sigma) \geq \frac{\sqrt{3}}{4} \, \scg(\Sigma)^2 + \varepsilon 
\end{equation}
for some universal constant $\varepsilon >0$ (which does not depend on~$k$ or the metric). \label{main3}
Je ne sais pas si c'est vrai???
\forgotten
\end{enumerate}
\end{theorem}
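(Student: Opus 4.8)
The plan is to derive both sharp inequalities from one mechanism: uniformize the metric into a fixed conformal class, pass to a \emph{ramified cover} on which the relevant sweepout becomes a pencil of parallel closed geodesics, and close the estimate with a Cauchy--Schwarz inequality whose equality case would force the metric to agree with a flat cone model. Since that model is not a complete metric on the punctured sphere, the equality case is empty, which accounts for the \emph{strict} inequalities in \eqref{eq:main1}--\eqref{eq:main2}.

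Consider first $k=3$. The three--punctured sphere has a unique conformal structure, so the given complete metric writes $g=f^{2}g_{0}$, $f>0$, where $g_{0}$ is the flat Calabi--Croke metric (two unit equilateral triangles glued along their boundaries, the three cone points of angle $\tfrac{2\pi}{3}$ deleted). The $3$--fold cyclic cover of $S^{2}$ totally ramified over $x_{1},x_{2},x_{3}$ has total space the hexagonal flat torus $\T$ (it carries an order--$3$ automorphism with fixed points), and the covering map $\pi\colon\T\to\Sigma$ is a local isometry for $\pi^{*}g_{0}$ off the three ramification points, with $\area(\T,\pi^{*}g_{0})=3\,\area(\Sigma,g_{0})$. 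Pick a suitable direction realizing the shortest closed geodesics of $\T$ and let $\{L_{t}\}$ be the corresponding pencil of parallel closed geodesics, of common length $\ell_{0}$, foliating $\T$ over a transversal of $\mu$--measure $3\,\area(\Sigma,g_{0})/\ell_{0}$. Pushing forward, $\{\gamma_{t}=\pi(L_{t})\}$ is a one--parameter family of figure--eight geodesics of $(\Sigma,g_{0})$, each of $g_{0}$--length $\ell_{0}$ and each noncontractible in $\Sigma$ (its two loops enclose two of the three punctures), satisfying the coarea identity $\int\!\big(\int_{\gamma_{t}}h\,ds_{g_{0}}\big)\,d\mu(t)=3\int_{\Sigma}h\,dA_{g_{0}}$ for every $h\ge 0$. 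Let $w$ be the infimum of the $g$--lengths of loops in the free homotopy class $[\gamma_{0}]$, so that $\length_{g}(\gamma_{t})\ge w$ for all $t$. Taking $h=f$ and applying Cauchy--Schwarz,
\[ w\cdot\tfrac{3\,\area(\Sigma,g_{0})}{\ell_{0}}\ \le\ \int\Big(\int_{\gamma_{t}}f\,ds_{g_{0}}\Big)d\mu(t)\ =\ 3\!\int_{\Sigma}\!f\,dA_{g_{0}}\ \le\ 3\,\area(\Sigma,g)^{1/2}\,\area(\Sigma,g_{0})^{1/2}, \]
whence $w\le\big(\ell_{0}/\area(\Sigma,g_{0})^{1/2}\big)\,\area(\Sigma,g)^{1/2}=2^{1/2}3^{1/4}\,\area(\Sigma,g)^{1/2}$, the last equality being the systolic ratio of the Calabi--Croke model. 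Equality in Cauchy--Schwarz would make $f$ constant, i.e.\ $g$ proportional to $g_{0}$, which is impossible since $g_{0}$ is not complete on $\Sigma$; hence the bound is strict.

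For $k=4$ the metric now lies in one of a one--parameter family of conformal structures, but every conformal four--punctured sphere equals $\T/\{\pm 1\}$ for a unique flat torus $\T$, which supplies a flat cone model $g_{0}$ (four cone points of angle $\pi$) in the conformal class of $g$; the $2$--fold cover $\pi\colon\T\to\Sigma$ branched over the four punctures now pushes a pencil of parallel closed geodesics of $\T$ to a pencil of \emph{embedded} closed geodesics of $(\Sigma,g_{0})$, each separating the four punctures two against two, all of length $\ell_{0}=\lambda_{1}(\T)$. The same coarea identity and Cauchy--Schwarz give $w\le\big(\ell_{0}/\area(\Sigma,g_{0})^{1/2}\big)\,\area(\Sigma,g)^{1/2}$ for $w$ the infimal $g$--length in that class, and since $\area(\Sigma,g_{0})=\tfrac12\,\mathrm{covol}(\T)$, Hermite's inequality $\lambda_{1}(\T)^{2}\le\tfrac{2}{\sqrt3}\,\mathrm{covol}(\T)$ bounds the model's systolic ratio by $2\cdot 3^{-1/4}$, with equality only for the hexagonal torus, that is, the regular tetrahedral sphere. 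Strictness follows as before. In both cases it remains to realize the infimum $w$ by a genuine geodesic: I would minimize length (or run Birkhoff curve shortening) over the relevant free homotopy class, the critical curve being then a figure--eight geodesic, resp.\ a simple closed geodesic, of length $w$, which yields \eqref{eq:main1}, resp.\ \eqref{eq:main2} for $k=4$. The main obstacle in this step is to prevent a minimizing sequence from escaping into an end; I expect to handle it via completeness and finiteness of area together with the topological constraint that the two loops of a figure--eight enclose distinct punctures (resp.\ that the class is non--peripheral), which confines the sequence to a fixed compact part. Optimality is obtained by constructing explicit smooth complete finite--area metrics converging, in area and in width, to the Calabi--Croke, resp.\ tetrahedral, model by opening each cone point into an arbitrarily thin cusp.

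Finally, for $k\ge 5$ the conformal model is no longer available, since the moduli space of the $k$--punctured sphere is too large, and I would reduce to the case $k=4$. Choose a simple closed geodesic $c$ of $\Sigma$ separating three of the punctures from the remaining $k-3\ge 2$; if $\length(c)<2\cdot 3^{-1/4}\,\area(\Sigma)^{1/2}$ we are done. Otherwise, cap off the three--punctured side along $c$ by an arbitrarily thin once--punctured disk of area $<\eps$, producing a four--punctured sphere of area $<\area(\Sigma)+\eps$ to which the case $k=4$ applies; letting $\eps\to 0$ and analysing the limit of the resulting geodesics---which either stay off the cap, hence descend to closed geodesics of $\Sigma$, or degenerate onto $c$, a possibility excluded by the case assumption---produces the desired closed geodesic. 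I expect this descent step, and the bookkeeping needed to keep the inequality strict in the limit, to be the delicate point for $k\ge 5$; there the constant is not asserted to be optimal, so a softer argument is permissible.
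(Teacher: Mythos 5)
Your strategy for $k=3,4$ is a genuine alternative to the paper's. Where the paper collapses the ends, pulls the resulting singular metric back to the torus through the ramified cover and quotes Loewner's inequality upstairs, you fix the flat cone model $g_0$ in the conformal class of $g$, push the extremal foliation of the covering torus down to a family of figure-eight (resp.\ separating) geodesics of $g_0$, and rerun the coarea/Cauchy--Schwarz averaging downstairs; your constants and the identification of the equality case are correct, and the argument makes the strictness transparent, at the price of invoking uniformization (which the paper deliberately avoids). One correction within this part: for $k=3$ the curves $\gamma_t$ do \emph{not} all lie in a single free homotopy class. The order-three deck transformation does not preserve the systolic direction, and as $t$ crosses the three lines through fixed points the pair of punctures encircled by the two lobes of the figure-eight changes; the classes $[ab^{-1}]$, $[bc^{-1}]$, $[ca^{-1}]$ are pairwise non-conjugate, so $w$ must be the infimum over the finitely many admissible figure-eight classes that occur, not over $[\gamma_0]$ alone. (For $k=4$ the involution does preserve the direction and your claim that all $\gamma_t$ are isotopic is correct.)

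The genuine gaps are elsewhere. (i) The step you defer --- realizing $w$ by an actual closed geodesic of the complete noncompact surface --- is exactly the difficulty the paper's construction is designed to bypass: by collapsing ends $C_i$ with $d(C_i,\partial \bar{C}_i)$ larger than the target length, the systolic loop is produced on the \emph{closed} torus by compactness, and the distance bound then forces it to avoid the collapsed disks, hence to project into $\Sigma$. In your setup you must prove that a length-minimizing sequence in a fixed non-peripheral class cannot escape into an end; this is true for complete finite-area metrics (finite area yields arbitrarily short separating loops deep in each end, along which one surgers competitors), but it is a Thorbergsson/Bangert-type argument that has to be supplied, not merely expected. (ii) The reduction for $k\ge 5$ does not work as written: capping off the side of $c$ containing three punctures by a once-punctured disk produces a sphere with $k-2$ punctures, not four, and both the existence of the separating simple closed \emph{geodesic} $c$ and the limit analysis as $\eps\to 0$ are unaddressed. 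No such reduction is needed: as in the paper, collapse all $k$ ends, take the degree-two cover branched over only four of the resulting points, and note that the systolic loop of the torus avoids the preimages of \emph{all} the collapsed disks, hence projects to a closed geodesic of $\Sigma$ itself.
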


%\noindent Note that $2^{\frac{1}{2}} \cdot 3^{\frac{1}{4}} = 1.861...$ and $2 \cdot 3^{-\frac{1}{4}} = 1.519...$.

%\medskip

\forget
\begin{remark}
A version of this theorem holds true for complete Finsler punctured spheres of finite Holmes-Thompson area; see Section~\ref{sec:finsler} for a brief account on reversible and non-necessarily reversible Finsler metrics and the Holmes-Thompson volume.
In this case, the multiplicative constant $2^{\frac{1}{2}} \cdot 3^{-\frac{1}{4}}$ in~\eqref{eq:main1} is replaced with $2^{-\frac{1}{2}} \cdot 3^{-\frac{1}{2}} \cdot \pi^{\frac{1}{2}}$ for reversible Finsler metrics and with $2^{\frac{1}{2}} \cdot \pi^{\frac{1}{2}}$ for non-necessarily reversible Finsler metrics.
Similarly, the multiplicative constant $2 \cdot 3^{-\frac{1}{4}}$ in~\eqref{eq:main2} and~\eqref{eq:main3} is replaced with $\pi^{\frac{1}{2}}$ for reversible Finsler metrics and with $2 \cdot 3^{-\frac{1}{2}} \cdot \pi^{\frac{1}{2}}$ for non-necessarily reversible Finsler metrics.
Note that the multiplicative constants in the reversible and non-necessarily reversible Finsler cases are also optimal.
\end{remark}
\forgotten

The extremal metric on the three-punctured sphere in~\eqref{eq:main1} is modelled on the Calabi-Croke sphere by attaching three cusps of arbitrarily small area around its singularities.
This can be done keeping the curvature nonpositively curved.

\medskip

The extremal metric on the four-punctured sphere in~\eqref{eq:main2} is modelled on the tetrahedral sphere by attaching four cusps of arbitrarily small area around its singularities.
Here, the tetrahedral sphere is defined as the piecewise flat sphere with four conical singularities of angle~$\pi$ given by the regular tetrahedron.
This can also be done keeping the curvature nonpositively curved.

\medskip

A version of this theorem holds true for complete Finsler punctured spheres of finite Holmes-Thompson area; see Section~\ref{sec:finsler} for a brief account on reversible and non-necessarily reversible Finsler metrics and the Holmes-Thompson volume.

\medskip

For reversible Finsler metrics, we have the following.

\begin{theorem} \label{theo:mainFr}
Let $\Sigma = S^2 \setminus \{ x_1,\dots, x_k \}$ be a $k$-punctured sphere with a complete reversible Finsler metric of finite area.
Then the following holds.
\begin{enumerate}
\item If $k=3$ then there exists a noncontractible figure-eight geodesic~$\gamma$ on~$\Sigma$ such that
\begin{equation} \label{eq:mainFr1}
\length(\gamma) < 2^{-\frac{1}{2}} \cdot 3^{\frac{1}{2}} \cdot \pi^{\frac{1}{2}} \, \sqrt{\area(\Sigma)}.
\end{equation}
%Furthermore, this inequality is optimal. \label{mainFr1}
\item If $k \geq 4$ then there exists a noncontractible closed geodesic~$\gamma$ on~$\Sigma$ such that
\begin{equation} \label{eq:mainFr2}
\length(\gamma) < \pi^{\frac{1}{2}} \, \sqrt{\area(\Sigma)}.
\end{equation}
Furthermore, this inequality is optimal when $k \in \{4,\dots,6\}$. \label{mainFr2}
\end{enumerate}
\end{theorem}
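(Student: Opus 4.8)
The plan is to transpose the proof of Theorem~\ref{theo:main} to the reversible Finsler category, substituting the Holmes--Thompson area for the Riemannian area and the optimal Riemannian systolic inequality on the torus for its reversible Finsler analogue. Two preliminary points are needed. First, the Holmes--Thompson area is multiplicative under coverings: for a degree~$d$ covering $p\colon\tilde\Sigma\to\Sigma$ one has $\area_{HT}(p^*F)=d\cdot\area_{HT}(F)$, just as in the Riemannian case. Second, on a complete reversible Finsler punctured sphere of finite area, every non-peripheral free homotopy class contains a shortest geodesic: a length-minimizing sequence can neither be swallowed by a cusp (each cusp only absorbs its own peripheral class) nor otherwise escape to infinity (finite area), so the infimum is positive and attained. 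This already yields the existence of the geodesics in the statement; it remains to bound their lengths.

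For the length bound I would invoke the ramified covers that underlie the extremal Riemannian metrics. If $k\geq4$, pick four of the punctures and let $p\colon\tilde\Sigma\to\Sigma$ be the double cover branched over exactly these four; filling the punctures, the total space is a torus, so $\tilde\Sigma$ is a $(2k-4)$-punctured torus with $\area_{HT}(p^*F)=2\,\area(\Sigma)$. If $k=3$, take instead the triple cover branched over the three punctures --- the cover underlying the Calabi--Croke sphere --- whose filled total space is again a torus, so $\tilde\Sigma$ is a thrice-punctured torus with $\area_{HT}(p^*F)=3\,\area(\Sigma)$. In either case, cap off the cusps of $(\tilde\Sigma,p^*F)$ by disks of arbitrarily small area and diameter to obtain a closed reversible Finsler torus $\bar T$; a systolic geodesic of $\bar T$ can be moved off the caps, so it is realized by a closed geodesic of $(\tilde\Sigma,p^*F)$, and since $p_*\colon\pi_1(\tilde\Sigma)\hookrightarrow\pi_1(\Sigma)$ is injective its image is a noncontractible closed geodesic of $(\Sigma,F)$ of length at most $\sys(\bar T)$. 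When $k=3$ this image is necessarily non-simple --- the thrice-punctured sphere carries no essential simple closed geodesic --- so, after a length-nonincreasing surgery at a self-intersection point, one extracts from it a figure-eight geodesic.

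The crucial analytic input is the sharp reversible Finsler systolic inequality on the torus,
\[
\sys(\T^2,F)^2 \;\le\; \tfrac{\pi}{2}\,\area_{HT}(F),
\]
approached in the limit only by $\R^2/\Z^2$ equipped with the $\ell^\infty$-norm. For a flat (translation-invariant) reversible Finsler metric this is elementary: writing the unit ball of the norm as a centrally symmetric convex body $B\subset\R^2$ with lattice $\Lambda$, and normalizing $\sys=1$, there is no nonzero lattice point in the interior of $B$, hence ${\rm covol}(\Lambda)\geq\tfrac14\vol(B)$ by Minkowski's convex body theorem, and therefore $\area_{HT}(F)=\tfrac1\pi\vol(B^*)\,{\rm covol}(\Lambda)\geq\tfrac1{4\pi}\vol(B)\vol(B^*)\geq\tfrac2\pi$ by the planar Mahler inequality $\vol(B)\vol(B^*)\geq8$, with the square as equality case throughout; the reduction of an arbitrary reversible Finsler metric on $\T^2$ to the flat case goes as in the proof of Loewner's inequality. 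Feeding this into the cover reduction gives $\length(\gamma)^2<\tfrac{3\pi}{2}\,\area(\Sigma)$ for $k=3$ and $\length(\gamma)^2<\pi\,\area(\Sigma)$ for $k\geq4$ --- precisely \eqref{eq:mainFr1} and \eqref{eq:mainFr2} --- the strict inequality reflecting that the torus inequality is saturated only by a metric that degenerates when cusps are attached. For the optimality when $k\in\{4,5,6\}$ I would exhibit degenerating families: for $k=4$, the ``$\ell^\infty$-pillowcase'' $(\R^2/\Z^2,\ell^\infty)/(z\mapsto-z)$ --- a reversible Finsler sphere with four conical singularities of angle $\pi$ and Holmes--Thompson area $\tfrac1\pi$, whose shortest closed geodesic is a simple closed geodesic of length~$1$ avoiding the singularities --- with four cusps of vanishing area tapering the singularities; for $k=5$ and $k=6$, analogous flat polyhedral reversible Finsler spheres with the corresponding number of conical singularities, again decorated with vanishing cusps, play the same role, the ratio $\length(\gamma)^2/\area$ tending to $\pi$ in each case.

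The step I expect to be the principal obstacle is establishing the sharp reversible Finsler systolic inequality on the torus --- specifically, showing that flat reversible Finsler tori are extremal for the Holmes--Thompson systolic ratio, which in the Riemannian setting is Loewner's theorem and must be redone in the Finsler category; once this reduction is in hand, the value $\pi/2$ is forced by the elementary convex geometry above together with the normalization $\area_{HT}=\tfrac1\pi\vol(B^*)\,{\rm covol}(\Lambda)$ of the Holmes--Thompson area. Alongside this, care is needed to check that a short systolic geodesic of the capped torus can be moved off the caps without breaching the bound, that its projection is noncontractible and, for $k=3$, of figure-eight type, and --- for the optimality in the range $k\in\{4,5,6\}$ --- to pin down the polyhedral Finsler models and verify directly that their shortest closed geodesics behave as claimed, all while keeping the final inequalities strict.
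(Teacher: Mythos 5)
Your proposal follows essentially the same route as the paper: collapse (or cap off) the ends, pass to the degree-three (resp.\ degree-two) ramified cover of the torus branched over three (resp.\ four) of the punctures, apply the sharp reversible Finsler systolic inequality on the torus, and project the systolic loop back down; for $k=5,6$ you branch over only four punctures and let the remaining ones lift trivially, exactly as in the paper's remark, and your $\ell^\infty$-pillowcase is the paper's doubled $\ell^1$-square.

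Two points of divergence are worth flagging. First, the step you identify as ``the principal obstacle''---the sharp reversible Finsler Loewner inequality \eqref{eq:loewner2}---is not an obstacle at all: it is a known theorem (see \cite{sab10}, and \cite{ABT} for the non-reversible case), which the paper simply invokes as Theorem~\ref{theo:loewner}; your Minkowski--Mahler computation correctly handles the flat case and identifies the square as the equality case, but the reduction of an arbitrary reversible Finsler torus to a flat one is precisely the content of the cited result and need not be redone. Second, your description of the extremal examples for $k=5,6$ is off: there is no useful ``flat polyhedral reversible Finsler sphere with five or six conical singularities'' here (Gauss--Bonnet constrains the cone angles); the correct construction keeps the \emph{same} four-singularity pillowcase and attaches the one or two extra cusps of vanishing area at the centers of the two squares, which together with the four corners form six points pairwise at distance at least $\tfrac12\scg$, so that the marked homotopy systole does not drop. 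A minor further remark: for $k=3$ the paper obtains the figure-eight structure of the projected systolic loop directly from \cite[Lemma~7.1]{sab10} (with exactly one puncture in each of the three complementary domains), which is cleaner than extracting a figure-eight by surgery at a self-intersection and then re-tightening.
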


Contrary to \eqref{eq:main1}, the inequality~\eqref{eq:mainFr1} on the three-punctured sphere is not necessarily optimal.
The extremal metric on the four-punctured sphere in~\eqref{eq:mainFr2} is modelled on the sphere~$S^2$ obtained by gluing two copies of the square $[0,1] \times [0,1]$ endowed with the $\ell^1$-metric along their boundary, with four cusps of arbitrarily small area attached around the four vertices of the squares.
Note that the four vertices of the squares and the two centers of the squares are at distance $\frac{1}{2} \,  \scg(S^2)=1$ from each other.
Similarly, the extremal metrics on the five- and six-punctured spheres in~\eqref{eq:mainFr2} are obtained by attaching one or two extra cusps of arbitrarily small area around the centers of the squares.

\medskip

For non-necessarily reversible Finsler metrics, we have the following.

\begin{theorem} \label{theo:mainFnr}
Let $\Sigma = S^2 \setminus \{ x_1,\dots, x_k \}$ be a $k$-punctured sphere with a complete non-necessarily reversible Finsler metric of finite area.
Then the following holds.
\begin{enumerate}
\item If $k=3$ then there exists a noncontractible figure-eight geodesic~$\gamma$ on~$\Sigma$ such that
\begin{equation} \label{eq:mainFnr1}
\length(\gamma) < 2^{\frac{1}{2}} \cdot \pi^{\frac{1}{2}} \, \sqrt{\area(\Sigma)}.
\end{equation}
%Furthermore, this inequality is optimal. \label{mainFnr1}
\item If $k \geq 4$ then there exists a noncontractible closed geodesic~$\gamma$ on~$\Sigma$ such that
\begin{equation*} \label{eq:mainFnr2}
\length(\gamma) < 2 \cdot 3^{-\frac{1}{2}} \cdot \pi^{\frac{1}{2}} \, \sqrt{\area(\Sigma)}.
\end{equation*}
%Furthermore, this inequality is optimal when $k \in \{4,\dots,9\}$. \label{mainFnr2}
\end{enumerate}
\end{theorem}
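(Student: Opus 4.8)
The plan is to run the scheme of the proof of Theorem~\ref{theo:main} over Finsler metrics, feeding it a Finsler isosystolic inequality on the torus in place of the Riemannian one. The input I would use is the non-reversible analogue of the torus inequality underlying Theorem~\ref{theo:mainFr}: every complete non-reversible Finsler metric of finite Holmes--Thompson area on a torus, possibly with finitely many punctures, carries a closed geodesic $\tilde\gamma$ that is noncontractible in the torus and satisfies $\length(\tilde\gamma) \leq \sqrt{\tfrac{2\pi}{3}\,\area}$. On the closed torus this is the (best known) non-reversible Finsler systolic bound with Holmes--Thompson area, which is precisely the constant that will reproduce \eqref{eq:mainFnr1} and \eqref{eq:mainFnr2}; passing to the punctured torus is done by exhausting it by compact subsurfaces and noting that a length-minimizing sequence in a fixed nontrivial free homotopy class cannot escape into the cusps, which are thin and carry finite area. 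That this constant may not be optimal, and that the projection step below may lose more, is why no sharpness is claimed in Theorem~\ref{theo:mainFnr}.

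The next step is the topological reduction, which is independent of the metric. For $k=3$, take the connected cyclic triple cover $p\colon\tilde\Sigma \to \Sigma = S^2\setminus\{x_1,x_2,x_3\}$ associated with the homomorphism $\pi_1(\Sigma)=\langle a,b\rangle \to \Z/3$ sending the standard loops $a$ around $x_1$ and $b$ around $x_2$ to a generator; then $x_3$ is also totally ramified, and by Riemann--Hurwitz $\tilde\Sigma$ is a torus with three punctures (this is the branched cover $T^2 \to S^2(3,3,3)$ that models the Calabi--Croke picture). For $k\geq4$, fix four of the punctures and let $p\colon\tilde\Sigma\to\Sigma$ be the double cover of $S^2$ branched exactly over $x_1,x_2,x_3,x_4$ (the cover $T^2\to S^2(2,2,2,2)$ modelling the tetrahedral picture); then $\tilde\Sigma$ is a torus with $2k-4$ punctures, and only these four punctures are used, so the argument is uniform in $k$. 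In both cases pull back the Finsler metric $F$ to a complete non-reversible Finsler metric $\tilde F$ on $\tilde\Sigma$; since branch points form a null set and the Holmes--Thompson volume is multiplicative under coverings, $\area(\tilde\Sigma,\tilde F) = d\cdot\area(\Sigma,F)$ with $d=3$, resp. $d=2$.

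Now apply the torus inequality to $(\tilde\Sigma,\tilde F)$ to get $\tilde\gamma$, noncontractible in $T^2$, with $\length(\tilde\gamma)\leq\sqrt{\tfrac{2\pi}{3}\,d\,\area(\Sigma,F)}$, which is $2^{\frac12}\cdot\pi^{\frac12}\,\sqrt{\area(\Sigma,F)}$ for $d=3$ and $2\cdot 3^{-\frac12}\cdot\pi^{\frac12}\,\sqrt{\area(\Sigma,F)}$ for $d=2$. Push $\tilde\gamma$ down to a loop $c=p(\tilde\gamma)$ on $\Sigma$ of no greater length. A short homotopical lemma shows $c$ is noncontractible in $\Sigma$, since a loop that is noncontractible in the torus cover projects into the corresponding finite-index subgroup of $\pi_1(\Sigma)$ and cannot be trivial there. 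For $k\geq4$, minimize length in the free homotopy class of $c$ by a Finsler Birkhoff curve-shortening that stays in a compact part of $\Sigma$ (again because the cusps are thin); the minimizer is an honest closed geodesic, and \eqref{eq:mainFnr2} follows. For $k=3$, the class of $c$ lies in the index-three subgroup $\ker(\langle a,b\rangle\to\Z/3)$ and is conjugate to a short word such as $a^{\pm1}b^{\pm1}$, which is not represented by any simple closed curve on the thrice-punctured sphere; hence the three translates of $\tilde\gamma$ under the deck action must meet, and the length-minimizer in this class degenerates — exactly as in the Calabi--Croke model — to a geodesic in the shape of a figure-eight based at the vertex where its two loops meet, giving \eqref{eq:mainFnr1}. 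In all cases the resulting length is strictly less than the stated bound because equality in the torus inequality would force $\tilde F$ to be extremal on $T^2$, whereas no extremal non-reversible Finsler torus is invariant under a deck action with the required ramification pattern while also carrying extra cusps, so the branched cover of the actual $(\Sigma,F)$ is never extremal.

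The main obstacle, as in the reversible case, is the first paragraph: establishing (or citing) a sharp enough non-reversible Finsler systolic bound for $T^2$ with Holmes--Thompson area and transferring it, with a strict inequality, to the complete non-compact punctured setting. The secondary delicate point is the $k=3$ projection step — controlling the Finsler curve-shortening near the cusps and verifying that the minimizer in the class of $c$ degenerates to a genuine figure-eight geodesic rather than collapsing onto a short loop encircling a single puncture — which I would handle via a combinatorial analysis of projections of systolic loops under the branched cover together with the thinness of the cusps.
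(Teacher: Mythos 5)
Your route is essentially the paper's: the same degree-three and degree-two ramified covers $\pi_0,\pi_1:\T^2\to S^2$ of Sections~\ref{sec:degree3} and~\ref{sec:degree2}, the non-reversible Finsler Loewner inequality~\eqref{eq:loewner3} of~\cite{ABT} as the analytic input (your $\sqrt{2\pi/3}$ is exactly $2^{\frac12}\cdot 3^{-\frac12}\cdot\pi^{\frac12}$), the area relations \eqref{eq:3area} and \eqref{eq:2area}, and the projection analysis à la \cite[Lemma~7.1]{sab10} for the figure-eight when $k=3$. The one substantive divergence is how the ends are handled, and it is where your argument has a gap: you keep the full area of~$\Sigma$ by working on the open punctured torus and then recover the \emph{strict} inequality by asserting that equality would force $\tilde F$ to be an extremal non-reversible Finsler torus incompatible with the deck action. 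But Theorem~\ref{theo:loewner}\eqref{loewner3} only asserts ``equality \emph{if}'' the torus is the stated quotient of $(\R^2,\|\cdot\|_{\triangle})$; the converse characterization of equality is not available, so you cannot rule out that a non-extremal-looking pullback metric attains equality. The paper sidesteps this entirely: one first truncates/collapses cylindrical ends $C_i$ chosen far from the core (as in~\eqref{eq:distC_i}), producing a \emph{closed} singular sphere with $\area(S^2)<\area(\Sigma)$, applies the closed-torus inequality there, and checks that the systolic loop misses the collapsed points precisely because of the distance condition; strictness then comes for free from the lost area. Your exhaustion argument on the punctured torus can be made to work for existence of the geodesic, but you should import the truncation step to get ``$<$'' rather than ``$\leq$''.
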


Contrary to Theorem~\ref{theo:main} and Theorem~\ref{theo:mainFr}, the inequalities in this theorem are not necessarily optimal.
In Section~\ref{sec:extremal}, we present further optimal inequalities on punctured tori, punctured projective planes and punctured Klein bottle with a few ends, where the number of ends (interestingly) depends whether the metric is Riemannian, reversible Finsler or non-reversible Finsler.
These inequalities immediately follow from the corresponding optimal bounds for the underlying closed surfaces.

\medskip

\forget
\begin{theorem} \label{theo:mainF}
Let $\Sigma = S^2 \setminus \{ x_1,\dots, x_k \}$ be a $k$-punctured sphere with a complete Finsler metric of finite area.
Then the following holds.
\begin{enumerate}
\item If $k=3$ then there exists a noncontractible figure-eight geodesic~$\gamma$ on~$\Sigma$ such that
\begin{equation*} \label{eq:mainF1}
\length(\gamma) < 
\begin{cases}
2^{-\frac{1}{2}} \cdot 3^{\frac{1}{2}} \cdot \pi^{\frac{1}{2}} \, \sqrt{\area(\Sigma)} & \mbox{for reversible Finsler metrics} \\
2^{\frac{1}{2}} \cdot \pi^{\frac{1}{2}} \, \sqrt{\area(\Sigma)} & \mbox{for non-reversible Finsler metrics}
\end{cases}
%\area(\Sigma) \geq \frac{\sqrt{3}}{6} \, \scg(\Sigma)^2.
\end{equation*}
Furthermore, this inequality is optimal. \label{mainF1}
\item If $k \in \{4,\dots,8\}$ then there exists a noncontractible closed geodesic~$\gamma$ on~$\Sigma$ such that
\begin{equation*} \label{eq:mainF2}
\length(\gamma) < 
\begin{cases}
\pi^{\frac{1}{2}} \, \sqrt{\area(\Sigma)} & \mbox{for reversible Finsler metrics} \\
2 \cdot 3^{-\frac{1}{2}} \cdot \pi^{\frac{1}{2}} \, \sqrt{\area(\Sigma)} & \mbox{for non-reversible Finsler metrics}
\end{cases}
\end{equation*}
Furthermore, this inequality is optimal. \label{mainF2}
%%%%\forget
\item If $k \geq 9$ then there exists a noncontractible closed geodesic~$\gamma$ on~$\Sigma$ such that
\begin{equation*} \label{eq:mainF3}
\length(\gamma) < 
\begin{cases}
\pi^{\frac{1}{2}} \, \sqrt{\area(\Sigma)} - \varepsilon & \mbox{for reversible Finsler metrics} \\
2 \cdot 3^{-\frac{1}{2}} \cdot \pi^{\frac{1}{2}} \, \sqrt{\area(\Sigma)} - \varepsilon & \mbox{for non-reversible Finsler metrics}
\end{cases}
\end{equation*}
for some universal constant $\varepsilon >0$ (which does not depend on~$k$ or the metric). \label{mainF3}
Je ne sais pas si c'est vrai???
%%%%\forgotten
\end{enumerate}
\end{theorem}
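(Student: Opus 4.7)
The plan is to follow the same blueprint as for Theorems~\ref{theo:main} and~\ref{theo:mainFr}, substituting the Riemannian (respectively reversible Finsler) systolic inequality on a closed surface by its non-reversible Finsler counterpart in terms of the Holmes--Thompson area. In both parts, the reduction proceeds via a ramified cover of the punctured sphere $\Sigma$ whose total space $\tilde\Sigma$ is a (punctured) torus, and the conclusion is obtained by invoking the optimal non-reversible Finsler systolic inequality on~$\T^2$ (due to \'Alvarez Paiva--Balacheff--Tzanev), which has the form $\sys(\T^2)^2 \leq \tfrac{2\pi}{3}\,\area_{HT}(\T^2)$. The multiplicative constants in the statement are then obtained by tracking the degree of the cover and the multiplicativity of the Holmes--Thompson area.

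For Part~(1), $k=3$: I would construct a $3$-fold cyclic ramified cover $\tilde\Sigma\to\Sigma$ branched at the three punctures, modelled on the Finsler analogue of the Calabi--Croke quotient. Then $\tilde\Sigma$ is a thrice-punctured torus with $\area_{HT}(\tilde\Sigma)=3\,\area_{HT}(\Sigma)$. A Finsler-systolic loop on $\tilde\Sigma$ whose orbit under the deck group of order~$3$ is generic descends to a figure-eight closed geodesic~$\gamma$ on~$\Sigma$ through one of the three punctures, whose length equals (or is controlled by) $\sys(\tilde\Sigma)$. Combining with the torus inequality produces the constant $2^{\frac{1}{2}}\pi^{\frac{1}{2}}$.

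For Part~(2), $k\geq 4$: I would take the $2$-fold hyperelliptic-type ramified cover of $\Sigma$ branched at four of the punctures, modelled on the Finsler analogue of the tetrahedral cover used in the $k=4$ Riemannian case. The cover $\tilde\Sigma$ is a punctured torus with $\area_{HT}(\tilde\Sigma)=2\,\area_{HT}(\Sigma)$, and a Finsler-systolic loop on $\tilde\Sigma$ projects to a noncontractible closed geodesic $\gamma$ on $\Sigma$ of the same length. Inserting the torus inequality gives the claimed constant $2\cdot 3^{-\frac{1}{2}}\cdot\pi^{\frac{1}{2}}$. The fact that the bound is not asserted to be sharp is consistent with the extremal non-reversible Finsler torus not being realisable as the cover of a cusp-degenerate metric on the punctured sphere.

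The hardest step will be transferring the existence and minmax arguments from the closed Finsler case to the noncompact setting, in the non-symmetric regime. Concretely, one must verify a non-reversible analogue of Bangert--Thorbergsson's theorem for the figure-eight/closed minimizer on $\tilde\Sigma$, ensuring that minimizing sequences do not escape to infinity along the cusps of finite Holmes--Thompson area; this uses the thin-cusp geometry and is where the strict inequality~$<$ ultimately originates, since equality in the torus inequality holds only for specific flat Finsler tori that cannot arise as covers of metrics with genuine cusps. A secondary subtlety is the bookkeeping of orientation: in the non-reversible setting the length of a closed geodesic depends on its orientation, and one must check that length, Holmes--Thompson area, and the descent/lifting of geodesics all behave coherently with the deck group action so that the multiplicative constants in the torus inequality transfer cleanly to the punctured sphere.
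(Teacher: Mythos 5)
Your blueprint --- ramified covers of degree three and two onto a torus, the Finsler Loewner inequality of Theorem~\ref{theo:loewner} in the form~\eqref{eq:loewner2} or~\eqref{eq:loewner3}, and multiplicativity of the Holmes--Thompson area under the cover --- is the right one, and your constant bookkeeping is correct. But the order of operations is not the paper's, and the difference is exactly where your argument has a hole. You take a ramified cover of the noncompact surface~$\Sigma$ itself, land on a \emph{punctured} Finsler torus, and then must (i) prove a Loewner-type inequality for noncompact finite-area non-reversible Finsler tori and (ii) prove existence of a systolic minimizer there, i.e.\ a non-reversible Finsler analogue of Thorbergsson--Bangert preventing escape into the cusps. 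You flag this as ``the hardest step'' but do not carry it out, and neither ingredient is in the cited literature (the inequality in~\cite{ABT} is for closed tori; Thorbergsson and Bangert are Riemannian). The paper avoids this entirely by collapsing the cusps \emph{first}: one truncates each end at distance greater than the target length bound from a smaller end (as in~\eqref{eq:distC_i}), collapses to get a \emph{closed} singular sphere with $\area(S^2)<\area(\Sigma)$, pulls back to a \emph{closed} torus via $\pi_0$ or $\pi_1$ of Sections~\ref{sec:degree3}--\ref{sec:degree2}, and applies the closed-torus inequality. The distance bound then forces the systolic loop to avoid the collapsed points (else it would lie in a disk and be contractible), so it projects to a closed geodesic of the original punctured sphere. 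The strict inequality comes from $\area(S^2)<\area(\Sigma)$, not from ``thin-cusp geometry.''

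Two further points. First, your description of the $k=3$ projection is wrong as stated: the figure-eight geodesic does not pass ``through one of the three punctures''; it avoids all ramification points (this is what the distance argument guarantees) and bounds three domains each containing exactly one puncture, by \cite[Lemma~7.1]{sab10}; the condition is avoidance of the branch points, not genericity of the deck orbit. Second, your proposal is silent on parts of the statement: the optimality assertions (which in the reversible case for $k\geq4$ require exhibiting the approximating metrics built from the $\ell^1$ square torus, and which in fact only hold for $k\le 6$, not up to $8$ --- see Remark~\ref{rem:disj} and the discussion after Theorem~\ref{theo:mainFr}), the handling of the extra punctures when $k\geq5$ (branch only at four of the collapsed points and check the systolic loop also misses the lifts of the others), and part~(3) with the $\varepsilon$ improvement, which the paper's authors themselves could not establish and ultimately removed.
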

\forgotten

%Note that $2^{-\frac{1}{2}} \cdot 3^{\frac{1}{2}} \cdot \pi^{\frac{1}{2}} = 2.170...$

The following table gives an approximation of the the constant~$c_{\#}$ (optimal in some cases) for the inequality
\[
\scg(\Sigma) \leq c_{\#} \, \sqrt{\area(\Sigma)}
\]
where $\Sigma$ is a $k$-punctured sphere with a complete metric of finite area in the Riemannian, reversible Finsler and non-necessarily reversible Finsler cases when $k=3$ or~$4$.

\begin{table}[htbp!]
\begin{tabular}{|c||SrScSl|SrScSl|SrScSl|}
\hline
$c_{\#}$ & \multicolumn{3}{Sc|}{Riemannian} & \multicolumn{3}{Sc|}{reversible Finsler} & \multicolumn{3}{Sc|}{non-reversible Finsler} \\
\hline
\hline
k=3 & $2^{\frac{1}{2}} \cdot 3^{\frac{1}{4}}$ & $\simeq$ & $1.861...$ & $2^{-\frac{1}{2}} \cdot 3^{\frac{1}{2}} \cdot \pi^{\frac{1}{2}}$ &  $\simeq$ & $2.170...$ & $2^{\frac{1}{2}} \cdot \pi^{\frac{1}{2}}$ &  $\simeq$ & $2.506...$ \\
\hline
k=4 & $2 \cdot 3^{-\frac{1}{4}}$ &  $\simeq$ & $1.519...$ & $\pi^{\frac{1}{2}}$ &  $\simeq$ & $1.772...$ & $2 \cdot 3^{-\frac{1}{2}} \cdot \pi^{\frac{1}{2}}$ &  $\simeq$ & $2.046...$ \\
\hline
\end{tabular}
\end{table}

%\subsection{Spheres with many punctures}

We conclude by proving a roughly asymptotically optimal upper bound on the length of the shortest noncontractible closed geodesic on spheres with a large number of punctures; see Theorem~\ref{theo:g} for a more general statement for genus~$g$ surfaces with $k$ punctures.

\begin{theorem} \label{theo:asymp}
Let $\Sigma = S^2 \setminus \{ x_1,\dots, x_k \}$ be a $k$-punctured sphere with a complete Riemannian metric of finite area, where $k \geq 3$.
Then there exists a noncontractible closed geodesic~$\gamma$ on~$\Sigma$ such that
\begin{equation*} \label{eq:asymp}
\length(\gamma) \leq 4 \sqrt{2} \, \sqrt{\frac{\area(\Sigma)}{k}}.
\end{equation*}
Furthermore, the upper bound is roughly asymptotically optimal in~$k$.
\end{theorem}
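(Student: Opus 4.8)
The plan is to reduce the statement to the three‑punctured case, that is, to the case $k=3$ of Theorem~\ref{theo:main}, by localizing the problem inside a pair of pants of~$\Sigma$ of small area. When $k=3$ there is nothing to do: the case $k=3$ of Theorem~\ref{theo:main} provides a noncontractible figure‑eight geodesic of length $<2^{\frac12}3^{\frac14}\sqrt{\area(\Sigma)}\le 4\sqrt2\,\sqrt{\area(\Sigma)/3}$, the last inequality being $3^{3/4}<4$. So assume $k\ge 4$. First fix a pants decomposition of~$\Sigma$: there are $k-3$ pairwise disjoint simple closed curves $c_1,\dots,c_{k-3}$, each non‑peripheral in~$\Sigma$, cutting it into pairs of pants $P_1,\dots,P_{k-2}$. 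Since $\sum_j\area(P_j)\le\area(\Sigma)$, some~$P_j$ satisfies $\area(P_j)\le\area(\Sigma)/(k-2)$. For $\eps>0$, complete~$P_j$ to a complete finite‑area three‑punctured sphere~$\widehat P_j$ by attaching to each of its (at most three) boundary circles a cusp of area $<\eps$, so that $\area(\widehat P_j)<\area(\Sigma)/(k-2)+3\eps$ and $P_j\subset\widehat P_j$. By the case $k=3$ of Theorem~\ref{theo:main}, $\widehat P_j$ carries a noncontractible figure‑eight geodesic~$\gamma'$ with $\length(\gamma')<2^{\frac12}3^{\frac14}\sqrt{\area(\widehat P_j)}$.

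It remains to see that~$\gamma'$, viewed in~$\Sigma$, lies in a free homotopy class that is noncontractible and not freely homotopic into a cusp of~$\Sigma$, and that this class is realized by a closed geodesic of~$\Sigma$ of length at most $\length(\gamma')$. For the first point: being a figure‑eight, $\gamma'$ divides $\widehat P_j$ — hence also~$\Sigma$ — into three regions, each carrying at least one puncture of~$\Sigma$ (that puncture, when the corresponding puncture of~$\widehat P_j$ is a genuine one; otherwise all the punctures of~$\Sigma$ beyond the corresponding curve~$c_i$, of which there are at least two). Writing the class of~$\gamma'$ as $\ell_1\ell_2^{-1}$ with $\ell_1,\ell_2$ encircling the punctures of two of these three disjoint nonempty sets, a direct computation in $\pi_1(\Sigma)\cong F_{k-1}$ — after relabelling so that a puncture of the third region is~$x_k$ — shows that this class is nontrivial and not conjugate to a power of any~$x_i$, that is, noncontractible and non‑peripheral. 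For the second point one invokes, as in~\cite{thor}, that a noncontractible non‑peripheral class on a complete surface of finite area is realized by a length‑minimizing closed geodesic (a minimizing sequence of loops cannot escape into a cusp, so Arzelà--Ascoli applies). One thus obtains a noncontractible closed geodesic~$\gamma$ of~$\Sigma$ with $\length(\gamma)<2^{\frac12}3^{\frac14}\sqrt{\area(\Sigma)/(k-2)+3\eps}$, and letting $\eps\to 0$ and using $k-2\ge k/3$ for $k\ge 3$ together with $3^{3/4}<4$,
\[ \length(\gamma)\ \le\ 2^{\frac12}3^{\frac14}\sqrt{\tfrac{\area(\Sigma)}{k-2}}\ \le\ 2^{\frac12}3^{\frac34}\sqrt{\tfrac{\area(\Sigma)}{k}}\ \le\ 4\sqrt2\,\sqrt{\tfrac{\area(\Sigma)}{k}}. \]
Run with a pants decomposition into $2g-2+k$ pairs of pants, the same argument yields the genus‑$g$ statement announced in Theorem~\ref{theo:g}. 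The step I expect to be the main obstacle is the length bookkeeping hidden in the second point above: one must make sure that the short figure‑eight of~$\widehat P_j$ is not ``spent'' winding inside the attached cusps, so that it genuinely descends to a short curve of~$\Sigma$ in the stated class — this forces a careful choice of the completion~$\widehat P_j$, or a limiting argument as the attached cusps are pinched. Everything else is combinatorics together with the input of Theorem~\ref{theo:main}.

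For the reverse direction (rough asymptotic optimality), I would construct, for each~$k$, a complete hyperbolic metric on $S^2\setminus\{x_1,\dots,x_k\}$ by gluing $k-2$ hyperbolic pairs of pants along a path, each with totally geodesic boundary components of length~$1$ and with $3$, $2$ or~$1$ cusps as required; its area is $2\pi(k-2)$, and every non‑peripheral closed geodesic has length bounded below by a universal constant $c_0>0$ — the gluing curves have length~$1$, and any other one crosses some pair of pants essentially, hence has length at least the (fixed, positive) width of a pair of pants with two unit geodesic boundary components. Rescaling to area $\area(\Sigma)$ gives a complete metric on~$\Sigma$ with $\scg(\Sigma)\ge c_0\sqrt{\area(\Sigma)/(2\pi(k-2))}\ge c_1\sqrt{\area(\Sigma)/k}$ for a universal $c_1>0$; combined with the upper bound, this shows the inequality is optimal up to the universal factor $4\sqrt2/c_1$, which is the precise meaning of ``roughly asymptotically optimal in~$k$''.
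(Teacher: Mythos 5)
Your route is genuinely different from the paper's: the paper collapses the $k$ ends of~$\Sigma$ to obtain a closed surface~$M$ with $k$ marked points, invokes \cite[Lemma~6.5]{BPS} to replace~$M$ by a surface~$\bar M$ on which every disk of radius $R=\tfrac14\sys_*(M)$ has area at least $\tfrac12 R^2$, and then packs $k$ disjoint such disks around the marked points to get $\area(\Sigma)\ge \tfrac{k}{32}\sys_*(M)^2$. You instead localize to a small-area pair of pants and feed it to the sharp three-punctured inequality. Unfortunately, the step you yourself flag as ``length bookkeeping'' is a genuine gap, not bookkeeping, and I do not see how to close it along these lines.

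The problem is the passage from the figure-eight geodesic~$\gamma'$ of the completed pair of pants~$\widehat P_j$ back to~$\Sigma$. A complete end of area $<\eps$ attached along a circle~$c_i$ of length~$L_i$ must narrow quickly: if its width stayed above~$\delta$ up to depth~$d$, its area would be at least $\delta d$. Hence deep inside the attached cusp there are arbitrarily short circles winding around the end, and~$\widehat P_j$ acquires short essential figure-eights that use these circles and have no short counterpart inside~$P_j$. Concretely, take $P_j$ to be a flat cylinder $S^1(L)\times[0,h]$ with $L$ large, $h=\eps/L$, and a tiny disk removed (a pair of pants of area~$\approx\eps$ whose two long boundary circles are the curves $c_i$ of the decomposition). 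Every essential figure-eight \emph{contained in}~$P_j$ has a loop freely homotopic to the core circle, hence length at least~$L$; yet $\widehat P_j$ has figure-eights of length $O(\sqrt\eps)$ obtained by winding deep in an attached cusp. So the geodesic~$\gamma'$ produced by Theorem~\ref{theo:main} applied to~$\widehat P_j$ cannot lie in $P_j\subset\Sigma$ in this situation, it is not a curve of~$\Sigma$ at all, and pushing it back into~$\Sigma$ costs up to~$\length(c_i)$, which is not controlled by $\area(P_j)$. Note that this cannot be repaired by choosing the cusps more cleverly (slow narrowing forces large area, which ruins the bound $\area(\widehat P_j)\lesssim\area(\Sigma)/(k-2)$), nor by a pinching limit, since pinching~$c_i$ degenerates the metric on~$P_j$ itself. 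The paper's method sidesteps this entirely because it only ever collapses genuine ends of~$\Sigma$, far out in the cusps, which does not create new short admissible classes. Your lower-bound construction (chains of hyperbolic pairs of pants with unit boundary geodesics, area $2\pi(k-2)$, systole bounded below via the collar lemma) is fine and is the standard way to see the rough asymptotic optimality.
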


Similar upper bounds hold true both in the reversible and non-necessarily reversible Finsler cases (albeit with a different multiplicative constant). 

\medskip

The proofs of our optimal bounds, namely Theorem~\ref{theo:main}, Theorem~\ref{theo:mainFr} and Theorem~\ref{theo:mainFnr}, do not rely on the conformal length method used in \cite{katz}, \cite{pu} and \cite{bav} to establish optimal systolic inequalities.
Instead, we exploit ramified covers from the torus to the sphere (the first one was introduced in~\cite{sab01} and used in~\cite{bal04} and~\cite{sab10} in the same context) to connect the extremal properties of the punctured spheres with the extremal equilateral flat torus in Loewner's systolic inequality; see Theorem~\ref{theo:loewner}. \\

\noindent {\it Acknowledgment.}
The second author would like to thank the Fields Institute and the Department of Mathematics at the University of Toronto, where part of this work was accomplished, for their hospitality.

\section{Finsler metrics and Holmes-Thompson volume} \label{sec:finsler}

This section aims at introducing the notions of Finsler metrics and Holmes-Thompson volume.

\medskip

Let us recall the definition of a Finsler metric.

\begin{definition}
A \emph{Finsler metric} on a manifold~$M$ is a continuous function $F:TM \to [0,\infty)$ on the tangent bundle~$TM$ of~$M$ which is smooth outside the zero section of~$TM$ and whose restriction~$F_x:=F_{|T_x M}$ to each tangent space~$T_xM$ is a (possibly asymmetric) norm, that is, 
\begin{enumerate}
\item Subadditivity: $F_x(u+v) \leq F_x(u) + F_x(v)$ for every $u,v \in T_xM$;
\item Homogeneity: $F_x(tu) = t F_x(u)$ for every $u \in T_xM$;
\item Positive definiteness: $F_x(u) >0$ for every nonzero $u \in T_xM$.
\end{enumerate}
A Finsler metric is \emph{reversible} if $F_x(-u) = F_x(u)$ for every $x \in M$ and $u \in T_xM$.
%In case this condition is not required, we say that the Finsler metric is \emph{non-reversible}.

The length of a piecewise smooth curve $\gamma:[0,1] \to M$ is defined as 
\[
\length(\gamma) = \int_0^1 F(\gamma'(t)) \, dt
\]
and the distance between two points~$x$ and~$y$ in~$M$ is the infimal length of a curve~$\gamma$ in~$M$ joining~$x$ to~$y$.
\end{definition}

We will consider the following notion of volume.

\begin{definition}
The \emph{Holmes-Thompson volume} of an $n$-dimensional Finsler manifold~$M$ is defined as the symplectic volume of its unit co-ball bundle~$B^*M \subseteq T^*M$ divided by the volume~$\epsilon_n$ of the Euclidean unit ball in~$\R^n$.
That is,
\[
\vol(M) = \frac{1}{\epsilon_n} \int_{B^*M} \tfrac{1}{n!} \, \omega_M^n
\]
where $\omega_M$ is the standard symplectic form on~$T^*M$.
\end{definition}

The Holmes-Thompson volume of a Finsler manifold is bounded from above by its Hausdorff measure, with equality if and only if the metric is Riemannian; see~\cite{duran}.
Note also that the Holmes-Thompson volume of a Riemannian manifold agrees with its Riemannian volume.

% \section{Ramified cover from the torus onto the sphere} 
\section{Degree-three ramified cover from the torus onto the Calabi-Croke sphere} \label{sec:degree3}

Consider the piecewise flat sphere $(S^2,g_0)$ with three conical singularities $x_1$, $x_2$, $x_3$ obtained by gluing two copies of a flat unit-side equilateral triangle along their boundaries.
The sphere~$(S^2,g_0)$ is referred to as the Calabi-Croke sphere.

\medskip
	
By the theory of coverings \cite{1}, there exist a degree-three cover \mbox{$\pi_0:\mathbb{T}^2 \to S^2$} ramified over the three vertices $x_1$, $x_2$, $x_3$ of $S^2$, and a deck transformation map $\rho_0:\mathbb{T}^2\to \mathbb{T}^2$ fixing only the ramification points of $\pi_0:\mathbb{T}^2 \to S^2$ with $\rho_0^3= \text{id}_{\mathbb{T}^2}$ and $\pi_0 \circ \rho_0 = \pi_0$.

\medskip

The ramified cover $\pi_0:\mathbb{T}^2 \to S^2$ can also be constructed in a more geometrical way
as follows. First, cut the sphere along the two minimizing arcs of $g_0$ joining
$x_1$ to $x_2$ and $x_1$ to $x_3$. This yields a parallelogram with all sides of unit
length. Then, glue three copies of this parallelogram along the two sides
between $x_3$ and the two copies of $x_1$ to form a hexagon; see Figure~\ref{fig:deg3}.
By identifying the opposite sides of this parallelogram, we obtain an
equilateral flat torus $\mathbb{T}^2$. The isometric rotation, defined on the hexagon,
centered at~$x_3$ and permuting the parallelograms, passes to the quotient and
induces a map $\rho_0:\mathbb{T}^2 \to \mathbb{T}^2$. This map gives rise to a degree-three ramified cover $\pi_0:\mathbb{T}^2 \to S^2$.

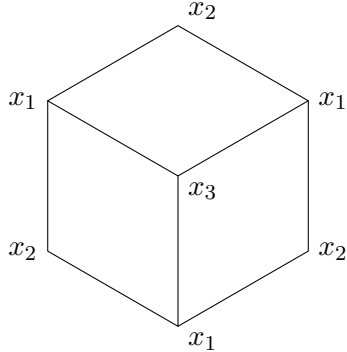
\begin{figure}[htbp!]
\centerline{
\begin{tikzpicture}
\coordinate [label=right:{$x_3$}] (x3) at (0,-0.2);
\coordinate [label=right:{$x_1$}] (x1) at (1.732,1);
\coordinate [label=left:{$x_1$}] (x1) at  (-1.732,1);
\coordinate [label=right:{$x_1$}] (x1) at (0,-2.2);
\coordinate [label=left:{$x_2$}] (x2) at (-1.732,-1);
\coordinate [label=right:{$x_2$}] (x2) at (1.732,-1);
\coordinate [label=right:{$x_2$}] (x2) at (0,2.2);
\draw (0,0)--(1.732,1)--(0,2)--(-1.732,1)--(0,0);
\draw (-1.732,1)--(-1.732,-1)--(0,-2)--(0,0);
\draw (0,-2)--(1.732,-1)--(1.732,1);
\end{tikzpicture}
}
\caption{Degree-three ramified cover of the Calabi-Croke sphere}
\label{fig:deg3}
\end{figure}

Thus, the Calabi-Croke sphere can be described as the quotient of an
equilateral flat torus by the deck transformation map $\rho_0:\mathbb{T}^2 \to \mathbb{T}^2$.

\medskip

Given a Riemannian metric with conical singularities on $S^2$, we will endow~$\mathbb{T}^2$ with the metric pulled back by $\pi_0:\mathbb{T}^2 \to S^2$ and its universal cover~$\mathbb{R}^2$ with the
metric pulled back by the composite map
$$\mathbb{R}^2 \to \mathbb{T}^2 \to S^2.$$

Since the degree of the Riemannian ramified cover $\pi_0:\mathbb{T}^2 \to S^2$ is equal to three, we
have
\begin{equation} \label{eq:3area}
\text{area}(\mathbb{T}^2) = 3 \,\text{area}(S^2).
\end{equation}

\begin{remark} \label{rem:finsler}
The degree-three ramified cover $\pi_0:\mathbb{T}^2 \to S^2$ was first introduced in~\cite{sab01} in relation with extremal properties of the Calabi-Croke sphere regarding the length of the shortest closed geodesic.
It was later used in~\cite{bal10} to show that the Calabi-Croke sphere is a local extremum for the length of the shortest closed geodesic among metrics with fixed area.
A different proof which does not require the uniformization theorem, but still makes use of the degree-three ramified cover $\pi_0:\mathbb{T}^2 \to S^2$, can be found in~\cite{sab10}.
\end{remark}

\section{Degree-two ramified cover from the torus onto the tetrahedral sphere} \label{sec:degree2}

Consider the piecewise flat sphere~$(S^2,g_1)$ with four conical singularities $x_1$, $x_2$, $x_3$, $x_4$ given by the unit-side regular tetrahedron.
The sphere~$(S^2,g_1)$ is referred to as the tetrahedral sphere.

\medskip
		
 By the theory of coverings, there exist a degree-two cover $\pi_1:\mathbb{T}^2 \to S^2$ ramified over the four vertices $x_1$, $x_2$, $x_3$, $x_4$ of $S^2$, and a deck transformation map $\rho_1:\mathbb{T}^2 \to \mathbb{T}^2$ fixing only the ramification points of $\pi_1:\mathbb{T}^2 \to S^2$ with $\rho_1^2= \text{id}_{\mathbb{T}^2}$ and $\pi_1 \circ \rho_1 = \pi_1$.
 
 \medskip
		
The ramified cover $\pi_1:\mathbb{T}^2 \to S^2$ can also be constructed in a more geometrical way as follows. First, cut the sphere along the three minimizing arcs of $g_1$ joining $x_1$ to $x_2$, $x_1$ to $x_3$ and $x_1$ to $x_4$. This yields an equilateral triangle with side length two. Then, glue two copies of this triangle along the side passing through~$x_4$ to form a parallelogram; see Figure~\ref{fig:deg2}.
By identifying the opposite sides of this parallelogram, we obtain an equilateral flat torus $\mathbb{T}^2$. The symmetry, defined on the parallelogram, centered at~$x_4$ and switching the two equilateral triangles, passes to the quotient and induces a map $\rho_1:\mathbb{T}^2 \to \mathbb{T}^2$. This map gives rise to a degree-two ramified cover $\pi_1:\mathbb{T}^2 \to S^2$.

\begin{figure}[htbp!]
\centerline{
		\begin{tikzpicture}
		\coordinate [label=right:{$x_4$}] (x4) at (3.1,1.9);
		\coordinate [label=right:{$x_1$}] (x1) at (6,3.464);
		\coordinate [label=left:{$x_1$}] (x1) at  (0,0);
		\coordinate [label=right:{$x_1$}] (x1) at (4,0);
		\coordinate [label=left:{$x_1$}] (x1) at (2,3.47);
		\coordinate [label=right:{$x_2$}] (x2) at (4,3.64);
		\coordinate [label=right:{$x_2$}] (x2) at (2,-0.2);
		\coordinate [label=right:{$x_3$}] (x3) at (5.1,1.85);
		\coordinate [label=left:{$x_3$}] (x3) at (1,1.85);
		\draw (3,1.732)--(5,1.732)--(6,3.464)--(4,3.464)--(3,1.732);
		\draw (3,1.732)--(2,0)--(4,0)--(5,1.732);
		\draw (2,0)--(0,0)--(1,1.732)--(3,1.732);
		\draw (1,1.732)--(2,3.464)--(4,3.464);
		\draw [dashed](4,3.464)--(5,1.732);
		\draw [dashed](2,3.464)--(4,0);
		\draw [dashed](1,1.732)--(2,0);
		\end{tikzpicture}
}
\caption{Degree-two ramified cover of the tetrahedral sphere}
\label{fig:deg2}
\end{figure}
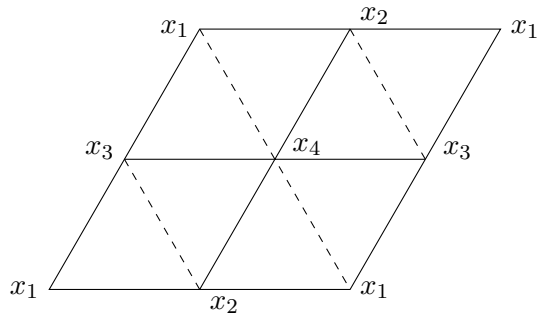
	
Thus, the tetrahedral sphere~$(S^2,g_1)$ can be described as the quotient of an equilateral flat torus by the deck transformation map $\rho_1:\mathbb{T}^2 \to \mathbb{T}^2$.

\medskip
		
Given a Riemannian metric with conical singularities on $S^2$, we will endow~$\mathbb{T}^2$ with the metric pulled back by $\pi_1: \mathbb{T}^2 \to S^2$ and its universal cover~$\mathbb{R}^2$ with the metric pulled back by the composite map
$$\mathbb{R}^2 \to \mathbb{T}^2 \to S^2.$$
		
Since the degree of the Riemannian ramified cover $\pi_1: \mathbb{T}^2 \to S^2$ is equal to two, we have
\begin{equation} \label{eq:2area}
\text{area}(\mathbb{T}^2) = 2 \, \text{area}(S^2).
\end{equation}

\section{Proof of the main theorem} \label{sec:proof}

In this section, we recall some basic results in systolic geometry and prove the main theorem of this article, both in the Riemannian case and in the Finsler case.

\begin{definition}
Let $M$ be a surface with a complete (Riemannian or Finsler) metric.
The \emph{systole} of~$M$ is defined as
\[
\sys(M) = \inf \{ \length(\gamma) \mid \gamma \mbox{ is a noncontractible loop of } M \}.
\]
When $M$ is closed, the systole is attained by the length of a noncontractible closed geodesic referred to as a \emph{systolic loop} of~$M$.
\end{definition}

We will also need the following extension of the notion of systole.

\begin{definition} \label{def:marked}
Let $M$ be a surface with $k$ punctures and $p$ marked points $x_1, \dots, x_p$, endowed with a complete (Riemannian or Finsler) metric.
A loop of~$M$ is \emph{admissible} if it lies in $M' = M \setminus \{ x_1, \dots, x_p \}$ and is not homotopic in~$M'$ to a point, a multiple of a noncontractible simple loop of a cusp, or a multiple of some small circle around a marked point~$x_i$. 
The \emph{marked homotopy systole} of~$M$ is the infimal length of the admissible loops of~$M$.
It is denoted by~$\sys_*(M)$.
\end{definition}

Let us recall Loewner's systolic inequality in the Riemannian case (unpublished), see~\cite{katz}, in the reversible Finsler case, see~\cite{sab10}, and in the non-necessarily reversible case, see~\cite{ABT}.

\begin{theorem}[\cite{katz}, \cite{sab10}, \cite{ABT}] \label{theo:loewner}
Let $\T^2$ be a torus.
Then the following statements hold true.
\begin{enumerate}
\item For every Riemannian metric on~$\T^2$, %there exists a noncontractible closed geodesic~$\gamma$ of~$\T^2$ such that
\begin{equation} \label{eq:loewner1}
\sys(\T^2) \leq 2^{\frac{1}{2}} \cdot 3^{-\frac{1}{4}} \, \sqrt{\area(\T^2)}
\end{equation}
with equality if and only if $\T^2$ is an equilateral flat torus. \label{loewner1}
\item For every reversible Finsler metric on~$\T^2$, %there exists a noncontractible closed geodesic~$\gamma$ of~$\T^2$ such that
\begin{equation} \label{eq:loewner2}
\sys(\T^2) \leq 2^{-\frac{1}{2}} \cdot \pi^{\frac{1}{2}} \, \sqrt{\area(\T^2)}
\end{equation}
with equality if $\T^2$ is a square flat torus endowed with the $\ell^1$- or $\ell^\infty$-metric. \label{loewner2}
\item For every non-necessarily reversible Finsler metric on~$\T^2$, %there exists a noncontractible closed geodesic~$\gamma$ of~$\T^2$ such that
\begin{equation} \label{eq:loewner3}
\sys(\T^2) \leq 2^{\frac{1}{2}} \cdot 3^{-\frac{1}{2}} \cdot \pi^{\frac{1}{2}} \, \sqrt{\area(\T^2)}
\end{equation}
with equality if $\T^2$ is homothetic to the quotient of~$\R^2$, endowed with the non-symmetric norm whose unit disk is the triangle with vertices $(1,0)$, $(0,1)$ and $(-1,-1)$, by the lattice~$\Z^2$. \label{loewner3}
\end{enumerate}
\end{theorem}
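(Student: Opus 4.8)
\medskip

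\noindent\textbf{Proof proposal.} These are classical statements; below is the strategy I would follow. In each case the point is to reduce to a \emph{flat} (translation-invariant) metric and then solve an extremal problem about lattices in the plane.

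\emph{Riemannian case.} I would use the conformal length method. By uniformization, write the given metric as $g=e^{2u}g_0$, with $g_0$ the Euclidean metric on a flat torus $\R^2/\Lambda$ and $u\in C^\infty(\T^2)$. After a rotation, pick a basis of $\Lambda$ of the form $(\ell,0),(a,h)$ with $h>0$ and $\ell$ the length of a shortest nonzero vector of $\Lambda$. The horizontal circles $\gamma_c$ at height $c\in[0,h]$ are noncontractible $g_0$-geodesics that sweep out $\T^2$ exactly once, and the $g$-length of $\gamma_c$ is $\int_0^\ell e^{u(t,c)}\,dt\ge\sys(g)$. Integrating over $c$ and applying the Cauchy--Schwarz inequality to $\int_{\T^2}e^u\,dA_{g_0}$ gives
\[
h\,\sys(g)\ \le\ \int_{\T^2}e^u\,dA_{g_0}\ \le\ \area(g_0)^{1/2}\,\area(g)^{1/2}=(\ell h)^{1/2}\,\area(g)^{1/2},
\]
hence $\sys(g)^2\le\frac{\ell^2}{\operatorname{covol}(\Lambda)}\,\area(g)$. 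The arithmetic fact that Hermite's constant in dimension two equals $\tfrac{2}{\sqrt3}$, with the hexagonal lattice as the unique maximizer of $\ell^2/\operatorname{covol}$, yields \eqref{eq:loewner1}. In the equality case all inequalities above are equalities, forcing $u$ constant ($g$ flat) and $\Lambda$ hexagonal; conversely the equilateral flat torus is extremal, which gives the ``if and only if''.

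\emph{Finsler cases, reduction to the flat case.} The scheme has two steps. Step one is the reduction: given a reversible (resp.\ arbitrary) Finsler metric $F$ on $\T^2$, one produces a translation-invariant metric $F_0$ with $\sys(F_0)\ge\sys(F)$ and $\vol(F_0)\le\vol(F)$. Since no uniformization theorem is available here, this is the step I expect to be the main obstacle; it is carried out in \cite{sab10} and \cite{ABT}, through an analysis of systolic geodesics on the universal cover together with an averaging/regularization procedure. Granting this, it remains to treat a flat Finsler torus $\R^2/\Lambda$, where the norm has unit ball a convex body $B$ containing the origin in its interior (centrally symmetric in the reversible case). Its unit co-ball bundle is $(\R^2/\Lambda)\times B^\circ$ with $B^\circ$ the polar body, so
\[
\vol(\T^2)=\tfrac1\pi\operatorname{covol}(\Lambda)\,\area(B^\circ),\qquad \sys^2\,\area(B)\le C\operatorname{covol}(\Lambda),
\]
where the second estimate is a Minkowski-type lattice point bound with a constant $C$ depending only on whether $B$ is assumed symmetric. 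Combining the two, $\operatorname{covol}(\Lambda)$ cancels and $\sys^2/\vol(\T^2)\le C\pi/\bigl(\area(B)\,\area(B^\circ)\bigr)$, so everything reduces to the planar Minkowski constant $C$ and the Mahler-type volume product $\area(B)\,\area(B^\circ)$.

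\emph{Finsler cases, conclusion.} In the centrally symmetric case $C=4$ (Minkowski's theorem) and $\area(B)\,\area(B^\circ)\ge 8$ (Mahler's inequality in the plane, extremal on parallelograms), whence $\sys^2\le\tfrac{\pi}{2}\vol(\T^2)$; the square flat torus with the $\ell^1$- or $\ell^\infty$-norm makes every inequality above sharp, which proves \eqref{eq:loewner2} and its optimality. In the general case one uses instead the sharp planar non-symmetric lattice point bound, whose extremal configuration is Ehrhart's simplex, together with the non-symmetric Blaschke--Santal\'o/Mahler inequality in the plane (extremal on triangles); these combine to give the constant $2^{\frac12}3^{-\frac12}\pi^{\frac12}$ of \eqref{eq:loewner3}, with a triangular-norm flat torus realizing equality, and one checks that the quotient described in the statement is of this extremal type.
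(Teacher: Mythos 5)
This statement is quoted in the paper with references to \cite{katz}, \cite{sab10} and \cite{ABT}; the authors give no proof of their own, so your proposal can only be measured against the known arguments. For part~(1) your conformal-length argument (uniformization, integration over the foliation by closed $g_0$-geodesics, Cauchy--Schwarz, and Hermite's constant $\gamma_2=2/\sqrt3$ with the hexagonal lattice as unique maximizer) is exactly the classical Loewner proof, including the rigidity discussion. For part~(2) your scheme is also the standard one: reduce to a translation-invariant metric (via the stable norm and a Burago--Ivanov-type volume comparison --- you rightly flag this as the hard step and attribute it to the references), then combine Minkowski's theorem ($\sys^2\area(B)\le 4\operatorname{covol}\Lambda$, legitimate here because the origin is forced to be the center of symmetry of $B$) with the planar Mahler inequality $\area(B)\area(B^\circ)\ge 8$. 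The arithmetic $4\pi/8=\pi/2$ is correct and the $\ell^\infty$ square torus saturates both steps. (A small caveat: the $\ell^1$-metric on the \emph{standard} lattice $\Z^2$ gives $\sys^2/\area=\pi/4$, not $\pi/2$; equality for the $\ell^1$-norm requires the lattice rotated by $\pi/4$. This is really an imprecision in the statement as quoted rather than in your argument.)

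Part~(3) contains a genuine gap. You propose to combine ``Ehrhart's bound'' with the non-symmetric Mahler inequality, but these two inequalities are anchored at different points of $B$: Ehrhart's theorem ($\area(K)\le\tfrac92\operatorname{covol}\Lambda$ for a convex body with no nonzero interior lattice point) requires the origin to be the \emph{centroid} of $K$, while the sharp non-symmetric volume-product bound $\area(B)\area(B^\circ)\ge\tfrac{27}{4}$ holds with the origin at the \emph{Santal\'o point}. In the systolic problem the origin of the norm is pinned at the unique interior lattice point of $\sys\cdot B$, which in general is neither of these; with the origin at an arbitrary interior point, Ehrhart's area bound simply fails, and the polar $B^\circ$ depends on where the origin sits. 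So the two estimates cannot be concatenated as written, even though their constants happen to multiply to the right answer $\tfrac92\cdot\tfrac{4}{27}\cdot\pi=\tfrac{2\pi}{3}$. The argument of \cite{ABT} avoids this by working entirely on the dual side: normalizing $\sys=1$, the systolic condition reads $h_{B^\circ}(v)\ge 1$ for every nonzero $v\in\Z^2$ (where $h$ is the support function of the co-ball), and one minimizes $\area(B^\circ)$ under this constraint; the minimum is $\tfrac32$, attained by the triangle with vertices $(1,0)$, $(0,1)$, $(-1,-1)$, which gives $\vol\ge\tfrac{3}{2\pi}\sys^2$ directly. To repair your sketch you would need to replace the Ehrhart/Mahler combination by this dual minimization (or by some argument that first reduces to the case where the interior lattice point is the centroid).
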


We can now proceed to the proof of the main theorem.

\begin{proof}[Proof of Theorem~\ref{theo:main}]
Consider the case~\eqref{main1}.
Let $\Sigma$ be a $3$-punctured sphere with a complete Riemannian metric of finite area.
%Fix~$\varepsilon>0$.
Take three cylindrical ends~$\bar{C}_1$, $\bar{C}_2$, $\bar{C}_3$ of~$\Sigma$.  %with $\area(\bar{C}_i) < \varepsilon$.
For every $i=1,\dots,3$, take a cylindrical end $C_i \subseteq \bar{C}_i$ with 
\begin{equation} \label{eq:distC_i}
d(C_i,\partial \bar{C}_i) > 2^{\frac{1}{2}} \cdot 3^{\frac{1}{4}} \, \sqrt{\area(\Sigma)}.
\end{equation}
Collapse every end~$C_i$ to a point~$x_i$.
This gives rise to a sphere~$S^2$ with a Riemannian metric with three singularities~$x_1$, $x_2$, $x_3$.
Note that $\area(S^2) < \area(\Sigma)$.

\medskip

Consider the degree-three ramified cover $\pi_0:\T^2 \to S^2$ with branched points~$x_1$, $x_2$, $x_3$ described in Section~\ref{sec:degree3}.
Denote by~$p_i$ the preimage of~$x_i$ under~$\pi_0:\T^2 \to S^2$ for $i=1,\dots,3$.
Endow~$\T^2$ with the singular pullback Riemannian metric.
The metric on~$\T^2$ can be smoothed out in the neighborhood of its singularities, keeping the area and the systole fixed.
By Loewner's inequality~\eqref{eq:loewner1} and the relation~\eqref{eq:3area}, there exists a noncontractible closed geodesic~$\gamma$ on~$\T^2$ with
\begin{equation} \label{eq:gamma<S}
\length(\gamma) \leq 2^{\frac{1}{2}} \cdot 3^{\frac{1}{4}} \, \sqrt{\area(S^2)}.
\end{equation}
The loop~$\gamma$ does not pass through any singularity~$p_i$.
Otherwise, by the length upper bound~\eqref{eq:gamma<S} and the distance lower bound~\eqref{eq:distC_i}, it would lie in the topological disk of~$\mathbb{T}^2$ given by the quotient~$\bar{C}_i/C_i$.
This would contradict the noncontractibility of~$\gamma$ in~$\mathbb{T}^2$.
Thus, the systolic loop~$\gamma$ projects to a closed geodesic of~$S^2 \setminus \{x_1,x_2,x_3\} \subseteq \Sigma$ under $\pi_0:\T^2 \to S^2$.
By~\cite[Lemma~7.1]{sab10}, this closed geodesic is a figure-eight geodesic of~$S^2$ (with exactly one singularity~$x_i$ in each of the three domains of~$S^2$ it bounds).
This concludes the proof of the case~\eqref{main1} in Theorem~\ref{theo:main}.

\medskip

In the case~\eqref{main2}, the proof is similar.
Start with a $4$-punctured sphere~$\Sigma$ with a complete Riemannian metric of finite area.
Take four cylindrical ends $C_1$, $C_2$, $C_3$, $C_4$ of~$\Sigma$ of small area located far away from the core of the surface.
Collapse the cylindrical ends into points~$x_i$.
This gives rise to a sphere~$S^2$ with a Riemannian metric with four singularities~$x_i$.
Consider the degree-two ramified cover $\pi_1:\T^2 \to S^2$ with branched points~$x_i$ described in Section~\ref{sec:degree2}.
By Loewner's inequality~\eqref{eq:loewner1} and the relation~\eqref{eq:2area}, there exists a noncontractible closed geodesic~$\gamma$ on~$\T^2$ with
\[
\length(\gamma) \leq 2 \cdot 3^{-\frac{1}{4}} \, \sqrt{\area(S^2)}.
\]
As previously, the systolic loop~$\gamma$ does not pass through a ramification point of $\pi_1:\T^2 \to S^2$ and projects to a closed geodesic\footnote{Arguing as in~\cite[Lemma~7.1]{sab10}, one can show that the systolic loop~$\gamma$ of~$\T^2$ projects either to a simple closed geodesic surrounding exactly two branched points of~$S^2$ on each side, or to a figure-eight geodesic with exactly one or two branched points in each of the three domains of~$S^2$ it bounds.} of \mbox{$S^2 \setminus \{x_1,x_2,x_3,x_4 \} \subseteq \Sigma$}.
This concludes the proof of the case~\eqref{main2} in Theorem~\ref{theo:main}.

In both cases, the extremal metrics are described in the introduction right after Theorem~\ref{theo:main}.
\end{proof}

\begin{remark} 
In the Finsler case, we simply need to replace Loewner's inequality~\eqref{eq:loewner1} with~\eqref{eq:loewner2} for reversible Finsler metrics, and with~\eqref{eq:loewner3} for non-necessarily reversible Finsler metrics.
This leads to the Finsler version of the main theorem given by Theorem~\ref{theo:mainFr} and Theorem~\ref{theo:mainFnr}.
The only minor novelty is when $k=5$ or~$6$.
In this case, we take $k$ cylindrical ends~$C_i$ of~$\Sigma$ of small area located far away from the core of the surface, and collapse the cylindrical ends into points~$x_i$.
Consider the degree-two ramified cover $\pi_1:\T^2 \to S^2$ branched \emph{only} at four points $x_1,\ldots,x_4$ as previously.
Apply the Finsler version of Loewner's inequality and observe that the systolic loops of~$\T^2$ do not pass through the preimages~$\pi_1^{-1}(x_i)$ of the singularities of~$S^2$ and project to closed geodesics of $S^2 \setminus \{x_1,\ldots,x_k \} \subseteq \Sigma$ as required.
\end{remark}

\begin{remark} 
Contrary to the Riemannian case, the extremal (reversible or non-reversible) Finsler metric on~$\T^2$ does not pass to the quotient under the deck transformation groups of $\pi_0:\T^2 \to S^2$, which explains why the inequalities~\eqref{eq:mainFr1} and~\eqref{eq:mainFnr1} may not be optimal.
The same occurs for the extremal non-reversible Finsler metric on~$\T^2$ with the deck transformation group of $\pi_1:\T^2 \to S^2$.
However, the extremal reversible Finsler metric on~$\T^2$ does pass to the quotient under the deck transformation group of $\pi_1:\T^2 \to S^2$.
In this case, the inequality~\eqref{eq:mainFr2} is optimal and the approximating metrics are described in the introduction right after Theorem~\ref{theo:mainFr}.
\end{remark}

\begin{remark} \label{rem:disj}
One may wonder if our technique can be applied to other ramified covers $\T^2 \to S^2$ in order to derive sharp upper bounds on the length of the shortest closed geodesics on other Riemannian punctured spheres~$\Sigma$.
At the heart of the matter is the property that the extremal equilateral flat metric on~$\T^2$ should induce an extremal Riemannian metric on~$\Sigma$ but also on~$S^2$ with marked points/branched points~$x_i$ corresponding to the ends of~$\Sigma$.
In particular, the marked homotopy systole of~$S^2$ should be greater or equal to the systole of~$\T^2$.
This implies that the ramification points~$p_i$ of~$\T^2$ must be at distance at least $\tfrac{1}{2} \, \sys(\T^2)$ from each other.
Thus, the open disks $D(p_i,\frac{1}{4} \, \sys(\T^2))$ must be disjoint.
Since the area of each of these flat disks is equal to $\tfrac{\pi}{16} \, \sys(\T^2)^2$, we deduce that the number of ramification points of~$\T^2$ does not exceed 
\[
\frac{\area(\T^2)}{\tfrac{\pi}{16} \, \sys(\T^2)^2} = \tfrac{8}{\pi} \sqrt{3} = 4.4...
\]
Therefore, the number of ramification points is at most~$4$.
In conclusion, our method to find extremal Riemannian metrics based on Loewner's inequality on the torus cannot apply to punctured spheres with more than $4$ ends.
\end{remark}

\section{Extremal metrics on noncompact surfaces} \label{sec:extremal}

In this section, we present other examples of noncompact surfaces admitting sharp upper bounds on the length of their shortest closed geodesic.

\begin{proposition}
Let $M$ be a closed surface with a systolically extremal (Riemannian or Finsler) metric.
Denote by~$\Sigma = M \setminus \{x_1,\dots,x_k\}$ the surface~$M$ with $k$ punctures.
Then every complete (Riemannian or Finsler) metric on~$\Sigma$ satisfies
\begin{equation} \label{eq:c}
\sys_*(\Sigma) \leq c(M) \, \sqrt{\area(\Sigma)}
\end{equation}
where $c(M) = \frac{\sys(M)}{\sqrt{\area(M)}}$.
\end{proposition}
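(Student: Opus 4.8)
The plan is to mimic the argument used in the proof of Theorem~\ref{theo:main}, replacing the specific ramified covers by a cusp-collapsing argument together with the definition of~$\sys_*$. First I would take a complete metric on~$\Sigma = M \setminus \{x_1,\dots,x_k\}$ of finite area; if the area is infinite there is nothing to prove, so assume $\area(\Sigma) < \infty$. Each puncture corresponds to an end of~$\Sigma$, and since the metric is complete of finite area, each such end is a cylindrical end~$\bar C_i$. For each $i$ I would choose a deeper cylindrical subend $C_i \subseteq \bar C_i$ with $d(C_i, \partial \bar C_i) > c(M)\sqrt{\area(\Sigma)}$, which is possible because the end has finite area but infinite ``depth'' in the complete metric (the distance function to the core is unbounded on the end).

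Next I would collapse each~$C_i$ to a point~$x_i$, obtaining the closed surface~$M$ equipped with a (possibly singular) complete metric of finite area, with $\area(M) < \area(\Sigma)$, together with the marked points~$x_1,\dots,x_k$. After a small smoothing near the collapsed points that changes area and systole by an arbitrarily small amount, the systolic inequality for~$M$ (the hypothesis that the metric on~$M$ is extremal gives the sharp constant $c(M) = \sys(M)/\sqrt{\area(M)}$, but here I only need the \emph{inequality} $\sys(N) \le c(M)\sqrt{\area(N)}$ for every metric~$N$ on~$M$, which is exactly what extremality of some metric on~$M$ asserts) produces a noncontractible closed geodesic~$\gamma$ on~$M$ with
\[
\length(\gamma) \le c(M)\,\sqrt{\area(M)} < c(M)\,\sqrt{\area(\Sigma)}.
\]
By the length bound and the distance bound~$d(C_i,\partial\bar C_i) > c(M)\sqrt{\area(\Sigma)}$, the loop~$\gamma$ cannot enter any of the collapsed disks~$\bar C_i / C_i$ deeply enough to pass through~$x_i$: if it contained~$x_i$ it would be confined to that disk and hence contractible, a contradiction. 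So $\gamma$ avoids the marked points and lifts to a noncontractible closed geodesic of $M \setminus \{x_1,\dots,x_k\} \subseteq \Sigma$, which by construction is not homotopic into a cusp nor to a small loop around a marked point, hence is admissible; therefore $\sys_*(\Sigma) \le \length(\gamma) < c(M)\sqrt{\area(\Sigma)}$. Letting the smoothing parameter and the depth of the~$C_i$ vary does not affect the strict inequality, so taking the infimum over admissible loops yields~\eqref{eq:c}, possibly after replacing the strict inequality by~$\le$ as in the statement.

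The main obstacle, and the only genuinely delicate point, is verifying that the closed geodesic~$\gamma$ produced on the \emph{closed} surface~$M$ remains noncontractible and essential \emph{after} removing the marked points --- that is, that it is admissible in the sense of Definition~\ref{def:marked}. Noncontractibility in~$M$ is automatic, but one must rule out that~$\gamma$ becomes homotopic in $M \setminus \{x_1,\dots,x_k\}$ either to a small loop around some~$x_i$ or into a cusp of~$\Sigma$; both are excluded by the same confinement argument, since such a loop would have to linger in the region $\bar C_i/C_i$, contradicting the length versus depth estimate. A secondary technical point is that the collapsed metric on~$M$ is only Riemannian (or Finsler) with finitely many conical-type singularities, so one must invoke the standard fact --- already used for~$\pi_0$ and~$\pi_1$ in Sections~\ref{sec:degree3}--\ref{sec:degree2} --- that such a metric can be smoothed near its singular points with arbitrarily small change in area and in the (marked) systole, so that the systolic inequality for smooth metrics on~$M$ applies in the limit.
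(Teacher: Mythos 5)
Your proposal is correct and follows essentially the same route as the paper: collapse the cylindrical ends far from the core, apply the extremal systolic inequality on the resulting closed surface homeomorphic to~$M$, and check that the systolic loop avoids the collapsed points and remains admissible in~$\Sigma$ (the paper phrases this as $\sys_*(\Sigma)\leq\sys(M')$ and $\area(M')\leq\area(\Sigma)$). Your confinement argument for why the geodesic misses the singular points is exactly the one the paper uses in the proof of Theorem~\ref{theo:main}.
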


\begin{proof}
To prove this upper bound on $\sys_*(\Sigma)$, simply collapse small enough and far enough cylindrical ends~$C_i$ of~$\Sigma$.
The resulting surface~$M'$ (where the metric is smoothed out) is homeomophic to~$M$ and satisfies $\sys_*(\Sigma) \leq \sys(M')$ and $\area(M') \leq \area(\Sigma)$.
Since the metric on~$M$ is systolically extremal, we clearly have $c(M') \leq c(M)$ and the desired result immediately follows.
\end{proof}

The inequality~\eqref{eq:c} is not optimal when $k$ is large, see Theorem~\ref{theo:g}, but it is for small values of~$k$.
For this, one needs to find $k$ points on~$M$ at distance at least~$\frac{1}{2} \, \sys(M)$ from each other.
Compare with Remark~\ref{rem:disj}.

For instance, we can consider the extremal (Riemannian or Finsler) metrics on the torus as follows; see Theorem~\ref{theo:loewner}.
The equilateral flat torus (see Theorem~\ref{theo:loewner}.\eqref{loewner1}) admits $4$ such points; see Figure~\ref{fig:Riemannian}.
Attaching cusps of arbitrarily small area around these $4$ points, we construct an almost extremal Riemannian metric on the torus with $k$ punctures, where $k \leq 4$.
Similarly, the square flat torus with the $\ell^1$-metric (see Theorem~\ref{theo:loewner}.\eqref{loewner2}) admits $8$ such points; see Figure~\ref{fig:reversible}.
As previously, we can construct an almost extremal reversible Finsler metric on the torus with $k$ punctures, where $k \leq 8$.
Finally, the square torus with the extremal non-reversible Finsler metric (see Theorem~\ref{theo:loewner}.\eqref{loewner3}) admits $9$ points whose distance, back and forth, between any pair of them is at least~$\sys(\T^2)$; see Figure~\ref{fig:non-reversible}. (Note that the asymmetric distance between two of these points might be less than $\frac{1}{2} \, \sys(\T^2)$ but the distance in the opposite direction makes up for it and their sum is at least~$\sys(\T^2)$.)
As previously, we can construct an almost extremal non-reversible Finsler metric on the torus with $k$ punctures, where $k \leq 9$.

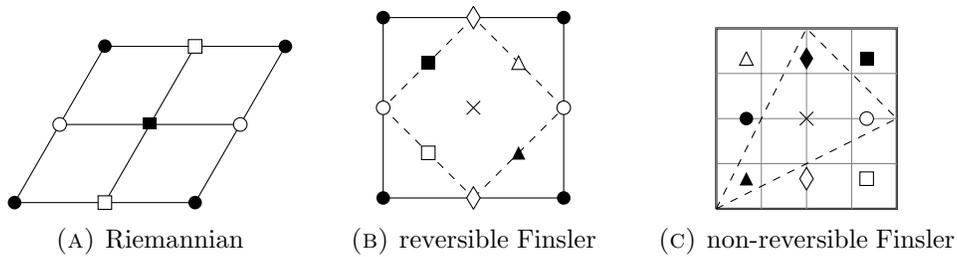
\begin{figure}[htbp!]
		\centering
		\begin{subfigure}[b]{0.3\textwidth}
			\centering
				\begin{tikzpicture}[scale=0.6]
			\draw (3,1.732)--(5,1.732)--(6,3.464)--(4,3.464)--(3,1.732);
			\draw (3,1.732)--(2,0)--(4,0)--(5,1.732);
			\draw (2,0)--(0,0)--(1,1.732)--(3,1.732);
			\draw (1,1.732)--(2,3.464)--(4,3.464);
			
			\fill (0,0) circle (0.15cm);
			\fill (6,3.464) circle (0.15cm);
			\fill (4,0) circle (0.15cm);
			\fill (2,3.47) circle (0.15cm);
			\filldraw [fill=white,draw=black] (5,1.732) circle (0.15cm);
			\filldraw [fill=white,draw=black] (1,1.732) circle(0.15cm);
			\fill (2.85,1.622) rectangle (3.15,1.882); 
			\filldraw [fill=white,draw=black] (1.85,-0.15) rectangle (2.15,0.15);
			\filldraw [fill=white,draw=black] (4.15,3.614) rectangle (3.85,3.314);
			\end{tikzpicture}
			\caption{Riemannian}
		\label{fig:Riemannian}
		\end{subfigure}
		\hfill
		\begin{subfigure}[b]{0.3\textwidth}
			\centering
		  	\begin{tikzpicture}[scale=0.6]
		
		\draw (-2,-2)--(2,-2)--(2,2)--(-2,2)--(-2,-2);
		\draw [dashed] (0,-2)--(2,0)--(0,2)--(-2,0)--(0,-2);
		\draw (-0.15,-0.15)--(0.15,0.15);
		\draw (-0.15,0.15)--(0.15,-0.15);
		\fill (2,2) circle (0.15cm);
		\fill (2,-2) circle (0.15cm);
		\fill (-2,-2) circle (0.15cm);
		\fill (-2,2) circle (0.15cm);
		\draw [fill=white,draw=black](2,0) circle (0.15cm);
		\filldraw [fill=white,draw=black] (-2,0) circle (0.15cm);
		
		\fill (-1.15,0.85) rectangle (-0.85,1.15);
		\filldraw [fill=white,draw=black](1.15,0.85)--(1,1.15)--(0.85,0.85)--(1.15,0.85);
		
		\filldraw [fill=white,draw=black] (-1.15,-1.15) rectangle(-0.85,-0.85);
		\fill(1.15,-1.15)--(1,-0.85)--(0.85,-1.15)--(1.15,-1.15);
		\filldraw [fill=white,draw=black] (0,2.25)--(-0.15,2)--(0,1.75)--(0.15,2)--(0,2.25);
		\filldraw [fill=white,draw=black] (0,-2.25)--(0.15,-2)--(0,-1.75)--(-0.15,-2)--(0,-2.25);
		\end{tikzpicture}
			\caption{reversible Finsler}
	\label{fig:reversible}
		\end{subfigure}
		\hfill
		\begin{subfigure}[b]{0.32\textwidth}
			\centering
			\begin{tikzpicture}[scale=0.6]
		
		\draw[thick] (-2,-2)--(2,-2)--(2,2)--(-2,2)--(-2,-2);
		\draw[step=1cm,gray,very thin] (-2,-2) grid (2,2);
		\draw [dashed] (-2,-2)--(2,0)--(0,2)--(-2,-2);
		\fill (-1.333,0) circle (0.15cm);
		\filldraw[fill=white,draw=black] (1.333,0) circle (0.15cm);
		\draw (-0.15,-0.15)--(0.15,0.15);
		\draw (-0.15,0.15)--(0.15,-0.15);
		\draw (1.183,-1.483) rectangle (1.483,-1.183);
		\fill (1.183,1.183) rectangle (1.483,1.483);
		\draw (-1.333,1.483)--(-1.183,1.183)--(-1.483,1.183)--(-1.333,1.483);
		\fill (-1.333,-1.183)--(-1.183,-1.483)--(-1.483,-1.483)--(-1.333,-1.183);
		\fill (0,1.583)--(-0.15,1.333)--(0,1.083)--(0.15,1.333)--(0,1.583);
		\filldraw [fill=white, draw=black] (0,-1.083)--(-0.15,-1.333)--(0,-1.583)--(0.15,-1.333)--(0,-1.083);
		\end{tikzpicture}
		\caption{non-reversible Finsler}
		\label{fig:non-reversible}
		\end{subfigure}
		\caption{Separated points on the torus}
		\label{fig:torus}
\end{figure}

The same construction applies to the projective plane where the extremal metric is given by the canonical metric both in the Riemannian and Finsler cases; see~\cite{pu} and~\cite{iva}.
More precisely, we can construct an almost extremal metric on the projective plane with $k$ punctures, where $k \leq 3$.

This construction also applies to the Klein bottle where the extremal metric is known both in the Riemannian and reversible Finsler settings.
Specifically, the extremal Riemannian Klein bottle is obtained by attaching along their boundary two copies of the Mobius band defined as the quotient of the $\frac{\pi}{4}$-neighborhood of the equator on the standard sphere by the antipodal map; see~\cite{bav}.
While the extremal Finsler Klein bottle is the square flat Klein bottle with the $\ell^1$-metric; see~\cite{cos}.
Thus, we can construct an almost extremal metric on the Klein bottle with $k$ punctures, where $k \leq 4$ in the Riemannian case and $k \leq 8$ in the reversible Finsler case.

\forget
\begin{remark}
The systolic inequalities of Theorem~\ref{theo:loewner} hold true for every torus~$\Sigma$ with $k$ punctures with a complete (Riemannian or Finsler) metric of finite area by replacing $\sys(\Sigma)$ with $\sys_*(\Sigma)$.
Note that this inequality is sharp if $k \leq 4$.
An almost extremal metric is given by the flat equilateral torus~$\T^2$ with $k$ arbitrarily small cusps attached around $k$ points at distance $\tfrac{1}{2} \, \sys(\T^2)$ from each other, where $k \leq 4$.
To prove this sharp upper bound on $\sys_*(\Sigma)$, simply collapse small enough and far enough cylindrical ends~$C_i$ of~$\Sigma$ and apply Theorem~\ref{theo:loewner} to the resulting torus~$\T^2$ (where the metric is smoothed out).
Since $\sys_*(\Sigma) \leq \sys(\T^2)$ and $\area(\T^2) \leq \area(\Sigma)$, we immediately obtain the desired result.
\end{remark}
\forgotten

\section{Surfaces with many punctures}

In this section, we show a roughly asymptotically optimal upper bound on the length of the shortest closed geodesic on a surface with a large number of punctures.

\medskip

We will need the following result, which can be found in~\cite[Lemma~6.5]{BPS}.

\begin{lemma} \label{lem:BPS}
Let $M$ be a closed surface with a Riemannian metric and $k$ marked points $x_1, \dots, x_k$, with $k \geq 3$.
Fix $R \in (0,\tfrac{1}{4} \, \sys_*(M)]$.
Then there exists a closed Riemannian surface~$\bar{M}$ such that
\begin{align}
\area(\bar{M}) & \leq \area(M) \label{BPS1} \\
\sys_*(\bar{M}) & = \sys_*(M) \label{BPS2} \\
\area \, \bar{D}(R) & \geq \tfrac{1}{2} R^2 \label{BPS3}
\end{align}
for every disk~$\bar{D}(R)$ of radius~$R$ in~$\bar{M}$.
%\begin{enumerate}
%\item $\area(\bar{M}) \leq \area(M)$; \label{BPS1}
%\item $\sys_*(\bar{M}) = \sys_*(M)$; \label{BPS2}
%\item the area of every disk of radius~$R \in (0,\tfrac{1}{4} \, \sys_*(M)]$ in~$\bar{M}$ is at least~$\tfrac{1}{2} R^2$. \label{BPS3}
%\end{enumerate}
\end{lemma}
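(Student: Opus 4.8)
The plan is to follow the strategy of \cite[Lemma~6.5]{BPS}: the only place a disk of radius $R \leq \tfrac14 \sys_*(M)$ can fail to have area $\geq \tfrac12 R^2$ is near a marked point or a thin neck, and both pathologies are removed by a controlled surgery that does not increase area nor decrease the marked homotopy systole. First I would observe that if $\bar D(R) = \bar D(p,R)$ is an embedded disk in which the exponential map (or a comparison argument) is well behaved, then the coarea formula applied to the distance function $r = d(p,\cdot)$ gives $\area \bar D(p,R) \geq \int_0^R \length(\partial \bar D(p,t))\, dt$, and as long as each circle $\partial \bar D(p,t)$ is noncontractible in $M \setminus \{x_1,\dots,x_k\}$ with $t < \tfrac14 \sys_*(M)$ one cannot directly bound its length from below — so the real content is topological: for small $R$ the metric circle $\partial \bar D(p,t)$ either bounds a disk on the side away from $p$ or encircles exactly one marked point. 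In the first case one gets the standard lower bound $\area \bar D(p,R) \geq \tfrac12 R^2$ (this is Croke's/Berger's disk estimate, or an isoperimetric-type inequality), so it is the second case that must be ruled out by modifying $M$.

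The surgery itself is the heart of the argument. Given a marked point $x_i$, look at the metric ball $B(x_i,\tfrac14 \sys_*(M))$; by definition of $\sys_*$ no admissible loop of length $< \sys_*(M)$ exists, so the portion of $M$ inside this ball is, after possibly passing to a slightly smaller radius, a topological disk containing only $x_i$. I would cut out a suitable sub-disk around $x_i$ and glue in a flat (or spherical) cap of the \emph{same} boundary length and of area no larger than what was removed, chosen so that the new center point sits at distance $\geq R$ from the boundary and the cap contributes area $\geq \tfrac12 R^2$ to every disk of radius $R$ centered in it. Doing this simultaneously around all $k$ marked points produces the closed surface $\bar M$ with no marked points; \eqref{BPS1} follows because each cap has area at most that of the excised piece, and \eqref{BPS2} follows because the surgery takes place inside balls too small to contain or to shortcut any admissible loop, so admissible loops of $M$ and noncontractible loops of $\bar M$ have the same minimal length. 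Finally \eqref{BPS3} is verified case by case: a disk $\bar D(R)$ in $\bar M$ either avoids all caps, where the classical disk estimate applies, or meets a cap, where the explicit geometry of the cap was arranged to give the bound.

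The step I expect to be the main obstacle is making the cap construction quantitatively compatible with all three conclusions at once — in particular ensuring that inserting the cap does not create a \emph{new} short noncontractible loop in $\bar M$ (which would violate \eqref{BPS2}) while simultaneously keeping the area from going up and forcing the lower area bound \eqref{BPS3} to hold for disks that straddle the old and new regions. This requires a careful choice of the cutting radius (slightly below $\tfrac14\sys_*(M)$, using that $\partial B(x_i,t)$ is a union of circles for a.e.\ $t$ by Sard) and a matching-boundary-length gluing, together with a triangle-inequality check that distances measured through the cap are never shorter than the original ones on the relevant scale. Since the statement is quoted from \cite{BPS}, I would present the construction in the level of detail needed to see that \eqref{BPS1}--\eqref{BPS3} hold and refer to \cite[Lemma~6.5]{BPS} for the remaining routine verifications.
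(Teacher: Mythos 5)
First, note that the paper does not prove Lemma~\ref{lem:BPS} at all: it is quoted verbatim from \cite[Lemma~6.5]{BPS}, so there is no in-paper argument to compare yours against. Judged on its own, your reconstruction has a genuine gap. You localize the possible failure of the bound $\area \, \bar D(R) \geq \tfrac12 R^2$ to neighborhoods of the marked points and perform surgery only there, invoking a ``standard disk estimate'' elsewhere. That estimate is false at the level of generality you need: a long, thin, \emph{contractible} finger of arbitrarily small area attached anywhere on~$M$, far from every~$x_i$, changes neither $\sys(M)$ nor $\sys_*(M)$, yet a ball of radius $R \leq \tfrac14 \sys_*(M)$ centered at its tip has area~$\approx 0$. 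In your own case analysis this is the situation where $\partial \bar D(p,t)$ ``bounds a disk on the side away from~$p$,'' and no length lower bound on $\partial \bar D(p,t)$ follows; the classical estimates of Croke/Berger/Gromov of this type require either a curvature hypothesis or a lower bound on the length of geodesic \emph{loops} based at~$p$, not on the (marked) systole. The actual content of \cite[Lemma~6.5]{BPS} is precisely to remove such thin parts \emph{wherever they occur}: whenever some ball $B(x,r)$ with $r \leq R$ has too little area, the coarea formula produces a level circle of length $< \sys_*(M)$, each component of which is either contractible in $M'$ or encircles a single marked point, and one caps the enclosed region by a cone of no greater area; this must then be iterated, and showing that the process terminates while preserving $\sys_*$ is the nontrivial part of the proof. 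A surgery confined to the balls $B(x_i,\tfrac14\sys_*(M))$ cannot yield \eqref{BPS3} for all disks.

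A second, more minor but real, inconsistency: you assert that the output $\bar M$ carries ``no marked points.'' But \eqref{BPS2} asserts $\sys_*(\bar M)=\sys_*(M)$, which presupposes that $\bar M$ retains the $k$ marked points, and the application in the proof of Theorem~\ref{theo:g} uses the distances $d_{\bar M}(x_i,x_j)$ and the disks $\bar D(x_i,\tfrac14\sys_*(M))$ \emph{in}~$\bar M$. Deleting the marked points (or reading $\sys_*(\bar M)$ as the ordinary systole of a closed surface) would make the lemma unusable where it is needed. If you intend to give a genuine proof rather than defer to \cite{BPS}, you should set up the cut-and-cap procedure globally, keep the marked points throughout, and supply the termination argument.
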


%\begin{definition}
%A surface of signature~$(g,k)$ is a surface of genus~$g$ with $k$ punctures.
%\end{definition}

The following result implies Theorem~\ref{theo:asymp} when $g=0$.

\begin{theorem} \label{theo:g}
Let $\Sigma$ be a surface of genus~$g$ with $k$ punctures, endowed with a complete Riemannian metric of finite area.
Then
\[
\sys_*(\Sigma) \leq C \frac{\log(g+2)}{\sqrt{g+k+1}} \, \sqrt{\area(\Sigma)}
\]
where $C$ is an explicit universal constant.
\end{theorem}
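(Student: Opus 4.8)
\textbf{Proof proposal for Theorem~\ref{theo:g}.}
The plan is to reduce the noncompact problem to a closed one, then run a covering/area-packing argument combined with Lemma~\ref{lem:BPS}. First I would dispose of the punctures exactly as in Section~\ref{sec:proof}: given the complete finite-area metric on~$\Sigma$, choose $k$ cylindrical ends of arbitrarily small area, located far from the core, and collapse each to a marked point~$x_i$ (smoothing the metric near the new singularities). This produces a closed surface~$M$ of genus~$g$ with $k$ marked points such that $\area(M) \leq \area(\Sigma) + o(1)$ and $\sys_*(\Sigma) \leq \sys_*(M)$, where the marked homotopy systole of~$M$ accounts for loops that must separate the $x_i$'s in the prescribed way. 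Since the bound we want to prove is strict-inequality-free (a $\leq$), it suffices to prove it for~$M$ and let the collapsed area tend to~$\area(\Sigma)$. So from now on the problem is: for a closed genus-$g$ surface~$M$ with $k \geq 3$ marked points, bound $\sys_*(M)$ by $C \, \frac{\log(g+2)}{\sqrt{g+k+1}}\,\sqrt{\area(M)}$.

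Next I would apply Lemma~\ref{lem:BPS} with the largest admissible radius $R = \tfrac14 \sys_*(M)$ to replace~$M$ by a closed surface~$\bar M$ with $\area(\bar M) \leq \area(M)$, $\sys_*(\bar M) = \sys_*(M)$, and the uniform lower ball-area bound $\area\,\bar D(R) \geq \tfrac12 R^2$ for \emph{every} disk of radius~$R$. The point of this step is that the marked homotopy systole is now genuinely reflected in metric balls: balls of radius up to $\tfrac14\sys_*$ have area at least $\tfrac12 R^2$, so they cannot be too numerous. I would then distinguish the two regimes governing $g+k+1$. When $k$ dominates (say $k \gtrsim g$), I would run a packing argument: take a maximal $2R$-separated set among the marked points $x_1,\dots,x_k$ in~$\bar M$; the balls of radius~$R$ about these points are disjoint, each of area $\geq \tfrac12 R^2$, so their number~$N$ satisfies $N \cdot \tfrac12 R^2 \leq \area(\bar M)$. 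One must check that $N$ is comparable to~$k$: if many marked points were clustered within~$2R$ of one another, a short loop enclosing such a cluster (together with a puncture count making it admissible) would already violate the definition of $\sys_*$ — this is where the admissibility conditions in Definition~\ref{def:marked} (not a multiple of a small circle around one $x_i$) are used. From $N \gtrsim k$ and $R = \tfrac14\sys_*$ we get $\sys_*(M)^2 \lesssim \area(\bar M)/k \leq \area(M)/k$, which is the claimed bound (indeed a stronger $1/\sqrt k$ bound) in that regime, with no log needed.

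When $g$ dominates, I would instead invoke the known sublinear-in-genus systolic-type bound for closed surfaces of genus~$g$ — the $\log(g)/\sqrt{g}$ behavior of Gromov/Buser--Sarnak type, which is precisely the content of the estimate feeding Lemma~\ref{lem:BPS} in~\cite{BPS}; here $\sys_*$ is bounded by the ordinary systole~$\sys(\bar M)$ when a genuine noncontractible nonseparating loop is short, and the marked version is handled by the same $R$-ball area lower bound. Concretely, one covers $\bar M$ by $R$-balls, uses $\area\,\bar D(R)\geq \tfrac12 R^2$ to bound the number of balls from above by $2\area(\bar M)/R^2$, and compares with a lower bound on the number of balls needed coming from the fact that the $R/2$-ball around any point embeds in a way that sees the topology (large genus forces many disjoint handles, each contributing area), yielding $R^2 \lesssim \frac{\log^2(g+2)}{g}\area(\bar M)$. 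Combining the two regimes and absorbing constants gives the uniform bound with $\sqrt{g+k+1}$ in the denominator. The main obstacle I anticipate is the interface between the two regimes and, more delicately, ensuring that the marked points genuinely contribute to the count even when $g$ is also large: one needs a single packing/covering argument that simultaneously exploits the $k$ separated marked points \emph{and} the $g$ topologically independent handles, so that the relevant area lower bound is of order $(g+k)\cdot\sys_*^2$ up to the logarithmic loss, rather than $\max(g,k)$. I expect this is handled in~\cite{BPS} by choosing a maximal separated set consisting of marked points together with suitably chosen basepoints detecting homology, and I would follow that scheme, citing~\cite[Lemma~6.5]{BPS} for the area estimate and supplying the short packing computation above.
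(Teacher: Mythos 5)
Your proposal follows essentially the same route as the paper: collapse the ends to obtain a closed genus-$g$ surface $M$ with $k$ marked points, apply Lemma~\ref{lem:BPS} with $R=\tfrac14\sys_*(M)$ and pack disjoint $R$-disks around the marked points (which are $\tfrac12\sys_*$-separated, since two closer marked points would be enclosed by a short admissible figure-eight) to get $\sys_*\lesssim\sqrt{\area(\Sigma)/k}$, invoke Gromov's $\log(g+2)/\sqrt{g+1}$ systolic bound for the genus regime, and take the better of the two bounds. The ``main obstacle'' you anticipate at the end is not one: since $\max(g+1,k)\geq\tfrac12(g+k+1)$, the minimum of the two estimates already yields the $\sqrt{g+k+1}$ denominator up to a constant---this is exactly how the paper concludes---so no simultaneous $(g+k)$-packing combining marked points with handles is needed, and the extra covering argument you sketch in the large-genus regime can simply be replaced by citing Gromov's inequality for the ordinary systole of~$M$ (noting $\sys_*(\Sigma)\leq\sys(M)$).
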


\begin{proof}%[Proof of Theorem~\ref{theo:asymp}]
Take $k$ cylindrical ends~$C_i \subseteq \Sigma$ far away from the core of~$\Sigma$ so that
\begin{equation} \label{eq:CiCj}
d(C_i,C_j) > \sys_*(\Sigma)
\end{equation}
for every $i \neq j$, and
\begin{equation} \label{eq:Ci}
\length(\alpha) > \sys_*(\Sigma)
\end{equation}
for every arc $\alpha$ of~$\Sigma$ with endpoints in~$C_i$ inducing a nontrivial class in~$\pi_1(\Sigma,C_i)$.

Collapse every end~$C_i$ to a point~$x_i$.
Denote by~$M$ the resulting closed surface with $k$ marked points $x_1,\dots,x_k$.
The Riemannian metric on~$\Sigma$ induces a metric on~$M$ that can be smoothed out in the neighborhood of the singularities~$x_i$, keeping the area and the marked homotopy systole fixed.
Note that $\area(M) \leq \area(\Sigma)$.

\begin{claim}
We have
\[
\sys_*(\Sigma) \leq \sys(M).
\]
\end{claim}

\begin{proof}
Let us show that $\length(\gamma) \geq \sys_*(\Sigma)$ for every noncontractible loop~$\gamma$ of~$M$.
By~\eqref{eq:CiCj}, we can assume that the loop~$\gamma$ passes through at most one singularity of~$M$, otherwise we are done.
We can further assume that the loop~$\gamma$ does not pass through any singularity~$x_i$ of~$M$.
Otherwise, it would admit an arc~$\alpha \subseteq \Sigma$ with endpoints in~$C_i$ inducing a nontrivial class in~$\pi_1(\Sigma,C_i)$ as a lift under the quotient map $\Sigma \to M$.
By~\eqref{eq:Ci}, we would be done.
Thus, the loop~$\gamma$ of~$M$ also lies in~$\Sigma$.
Furthermore, the loop~$\gamma$ is noncontractible in~$\Sigma$, even after collapsing the ends of~$\Sigma$.
It follows that $\gamma$ is an admissible loop of~$\Sigma$.
Therefore, $\length(\gamma) \geq \sys_*(\Sigma)$.
\end{proof}

The roughly asymptotically optimal systolic inequality for closed surfaces of large genus \cite{gro83} \cite{gro96} (see also \cite{bal04} and \cite{KS05} for alternate proofs) applied to~$M$, combined with the relations $\sys_*(\Sigma) \leq \sys(M)$ and $\area(M) \leq \area(\Sigma)$, shows that
\begin{equation} \label{eq:g}
\sys_*(\Sigma) \leq C' \, \frac{\log(g+2)}{\sqrt{g+1}} \, \sqrt{\area(\Sigma)}
\end{equation}
for some explicit universal constant~$C'$.
This proves the theorem when $k=0$.

Now, consider the closed surface~$\bar{M}$ obtained by applying Lemma~\ref{lem:BPS} to the closed surface~$M$ with its $k$ marked points, with $R=\tfrac{1}{4} \, \sys_*(M)$.
Observe that 
\[
d_{\bar{M}}(x_i,x_j) \geq \tfrac{1}{2} \, \sys_*(M).
\]
Otherwise we could find a figure-eight curve on~$\bar{M}$ of length less than $\sys_*(M)$, in the neighborhood of the segment~$[x_i,x_j]$, surrounding both $x_i$ and~$x_j$.
This would contradict the relation~\eqref{BPS2}.

It follows that the open disks $\bar{D}(x_i, \tfrac{1}{4} \, \sys_*(M))$ of~$\bar{M}$ are disjoint.
Combined with~\eqref{BPS1}, we derive 
\[
\area(\Sigma) \geq \area(\bar{M}) \geq \sum_{i=1}^k \area \, \bar{D}(x_i, \tfrac{1}{4} \, \sys_*(M)).
\]

Now, by~\eqref{BPS3}, we have
\[
\area \, \bar{D}(x_i, \tfrac{1}{4} \, \sys_*(M)) \geq \tfrac{1}{32} \, \sys_*(M)^2.
\]
Hence,
\begin{equation} \label{eq:k}
\sys_*(\Sigma) \leq \frac{4 \sqrt{2}}{\sqrt{k}} \, \sqrt{\area(\Sigma)}.
\end{equation}

Now, if $\displaystyle k \geq 10 \, \frac{g+1}{\log(g+2)^2}$, then $\displaystyle k \geq \frac{1}{10} \, \frac{g+k+1}{\log(g+2)^2}$ and the desired upper bound on~$\sys_*(\Sigma)$ follows from~\eqref{eq:k}.
Otherwise, if $\displaystyle k \leq 10 \, \frac{g+1}{\log(g+2)^2}$, then $\sqrt{g+1} \geq \frac{1}{10} \, \sqrt{g+k+1}$ and the desired upper bound follows from~\eqref{eq:g}.
%Combining the inequalities~\eqref{eq:g} and~\eqref{eq:k}, we derive the desired result.
\end{proof}

\begin{remark}
Theorem~\ref{theo:g} extends to Finsler metrics.
Indeed, given a non-necessarily reversible Finsler metric~$F$ on~$\Sigma$, we can replace~$F$ with a reversible Finsler metric~$F'$ defined by $F'(v)=F(v)+F(-v)$.
Then we replace~$F'$ with the continuous Riemannian metric~$g$ whose unit disk agrees with the inner Loewner ellipsoid associated to the unit tangent disk of~$F'$.
By construction, $\sys_*(\Sigma,F) \leq \sys_*(\Sigma,g)$ and $\area(\Sigma,g) \leq \lambda \, \area(\Sigma,F)$ for some explicit universal constant~$\lambda$.
(We refer to the proofs of Corollary~4.12 and Theorem~4.13 in~\cite{ABT} for the details.)
Thus, we can apply Theorem~\ref{theo:g} to the Riemannian metric~$g$ and immediately derive a similar upper bound on the length of the shortest closed geodesic of~$F$ in terms of the Holmes-Thompson area of~$F$ (with a different multiplicative constant).
\end{remark}

\end{document}